\newtheorem{proposition}{Proposition}
\newtheorem{corollary}{Corollary}
\newtheorem{lemma}{Lemma}
\newtheorem{example}{Example}
\theoremstyle{remark}
\newtheorem*{remark}{Remark}
\title{Stochastic Comparisons of Two-Units Repairable Systems}
\author[a]{Josu\'e~M.~Corujo\thanks{j.corujo@matcom.uh.cu (Corresponding author)}}
\author[a]{Jos\'e~E.~Vald\'es\thanks{vcastro@matcom.uh.cu }}
\author[b]{Juan~C.~Laria\thanks{jlaria@est-econ.uc3m.es}}
\affil[a]{Facultad de Matem\'atica y Computaci\'on, Universidad de La Habana, Cuba}
\affil[b]{Departamento de Estad\'istica, Universidad Carlos III de Madrid, Espa\~na}
\date{}
\begin{document}
\maketitle

{\bf Corresponding author:}

Josu\'e M. Corujo, Facultad de Matem\'atica y Computaci\'on, Universidad de La Habana, Cuba

{\bf Short tittle:}

Stochastic Comparisons of Repairable Systems

\abstract{We consider two models of two-units repairable systems: cold standby system and warm standby system. We suppose that the lifetimes and repair times of the units are all independent {exponentially distributed} random variables. Using stochastic orders we compare the lifetimes of systems under different assumptions on the parameters of exponential distributions.
{We also consider a cold standby system where the lifetimes and repair times of its units are not necessarily exponentially distributed.}}

{\bf Keywords:} stochastic orders; reliability; repairable systems; cold standby system; warm standby system.

\section{Introduction}

The existing literature on stochastic comparison of systems is extensive and focuses particularly on systems without reparation. Many recent studies (e.g. \cite{peng_zhao_optimal_2012}, \cite{peng_zhao_allocation_2013} and \cite{wang2017}) deal with the case where the units have exponentially distributed lifetimes. In contrast, other authors are more concerned with the case of non-exponential lifetimes (e.g. \cite{jose_e._valdes_optimal_2003}, \cite{jose_e._valdes_optimal_2006}, \cite{gerandy_brito_hazard_2011}, \cite{nandaCSTM}, \cite{chenCSTM} and \cite{kundu2017}).
Whilst some research has been carried out on this topic, no one as far as we know has addressed stochastic comparisons of repairable systems.

This paper focuses on two-unit repairable systems. A detailed introduction to these models can be found in Chapter 7 of \cite{boris_v._gnedenko_probabilistic_1995}. In particular, we consider two cases: warm standby systems (when the spare unit can fail) and cold standby systems (when the spare unit cannot fail).
An exhaustive analysis of these systems was carried out by \cite{tadashi_dohi_stochastic_2002} and the references therein, which included a summary of the earliest research on these models, some of them dealing with extensions of their results to real life systems.

In general, to find an analytic expression for the probability distribution of the lifetime of a repairable system could be impossible in the non-Markovian case. Unfortunately, when the system has more than two units, the density function of its lifetime could have a very complex expression, even in the Markovian case. This limitation makes it extremely difficult to compare the lifetimes of systems with more than two units. Several studies have established interesting properties for the reliability of two-units standby systems. For instance, \cite{li93} proved the convergence of the residual lifetime of a Markovian repairable system to the exponential distribution, and \cite{bao2012study} investigated a generalization of cold standby systems. Despite this interest, no one to the best of our knowledge has studied stochastic orderings of two-unit standby repairable systems. With this in mind, the primary aim of this paper is to compare lifetimes of two-unit repairable systems using stochastic orders.

The survival function of a random variable with distribution function $F(t)$ will be denoted by $\overline{F}(t) = 1 - F(t)$. The terms increasing and decreasing will be used in the non-strict sense.
Also, the Laplace transform of a real function $h(t)$ will be denoted by $\widehat{h}(s)$.
Given a nonnegative random variable $X_i$, and $\overline{F}_i(t)$, $f_i(t)$ and $r_i(t)$ its survival, probability density and hazard rate functions, respectively, for $i=1,2$, we consider the following definitions of stochastic orders. $X_1$ is said to be smaller than $X_2$ in the
\begin{enumerate}
	\item {\textit{Laplace transform order} (denoted as $X_1 \leq_{lt} X_2$), if $\widehat{\overline{F}}_1 (s) \leq \widehat{\overline{F}}_2(s)$, for all nonnegative $s$,}
	\item \textit{Increasing concave order} (denoted as $X_1 \leq_{icv} X_2$), if \(\displaystyle \int_0^{t} {F}_1(x)dx \ge \displaystyle \int_0^{t} {F}_2(x)dx\),
	\item \textit{Usual stochastic order} (denoted as $X_1 \leq_{st} X_2$), if $\overline{F}_1 (t) \leq \overline{F}_2(t)$, for all nonnegative $t$,
	\item \textit{Hazard rate order} (denoted as $X_1 \leq_{hr} X_2$), if $r_1 (t) \geq r_2(t)$, for all nonnegative $t$,
	\item \textit{Likelihood ratio order} (denoted as $X_1 \leq_{lr} X_2$), if $f_1(t)/f_2(t)$ is decreasing for all nonnegative $t$.
\end{enumerate}

The relationship between these stochastic orders is well known ($5.\Rightarrow 4. \Rightarrow 3. \Rightarrow 2. \Rightarrow 1.$).  \cite{shaked_stochastic_2007} and \cite{belzunce2015introduction} provide in-depth analysis of stochastic orders.

Suppose that we wanted to compare the lifetimes of two two-units standby systems, say, system $1$ and system $2$. Intuitively, if the lifetimes of units of system $1$ are ``greater'' than those of system $2$, and the repair times of system $1$ are ``lesser'' than those of system $2$, then  the lifetime of system $1$ should be ``greater''  than the lifetime of system $2$. The extent to which ``greater'' and ``lesser'' can be understood, and how to proceed in non-intuitive cases, remain unclear.

In this context, Proposition \ref{WSS_rep_fix} asserts that if two {Markovian} two-units  warm standby systems have stochastically equal lifetimes for their units and the hazard rates of the repair times of system $1$ are greater than those of system $2$ then the lifetime of system $1$ is greater than the lifetime of system $2$ in the hazard rate order. However this ordering does not hold in the likelihood ratio order. Proposition \ref{CSS_fix_func} verifies an analogous result for {Markovian} two-units cold standby systems.
These propositions suggest that, even under intuitive conditions, it is unclear in what sense the lifetime of one system is greater than the lifetime of the other.
Moreover, Example \ref{nenita3} shows that a likelihood ratio ordering can be obtained when the lifetime of the principal unit of the system with stochastically greatest lifetime has greater hazard rate than the respective lifetime of the system with stochastically smallest lifetime. This example suggests that stochastic orderings can be established, even under non intuitive hypothesis.
We believe that these results could be exploited in order to make decisions to improve the reliability of systems.

It would be interesting to find necessary and sufficient conditions to establish stochastic orderings between two-units standby systems. In this sense, Propositions \ref{WSS_fun_fix_ssi} and \ref{nenita2} provide necessary and sufficient conditions for a likelihood ratio ordering to hold between the lifetimes of two warm and cold standby systems.

This paper is organized as follows. Section \ref{sec::models} describes the models corresponding with the aforementioned cold and warm standby systems. Sections \ref{subsecc::warm} and \ref{subsec::cold_markovian} highlight the key theoretical results on Markovian warm and cold standby systems. Regarding the \emph{Increasing Likelihood Rate} ($ILR$) and \emph{Decreasing Likelihood Rate} ($DLR$) classes, Subsection  \ref{subs::ageing_classes} investigates whether the lifetimes of the Markovian warm and cold standby systems considered here, belong to these aging classes.
The final section takes a further step towards developing realistic models, assuming general distributions for the lifetimes and repair times of components in the cold standby system. These result could potentially lead to attractive applications in real-life systems.

\section{The models}\label{sec::models}

This section describes the models discussed in this investigation.
As explained in the introduction, we consider two models of systems, \emph{cold standby systems} and \emph{warm standby systems}. Both systems are composed of two units, allocated in two possible slots, namely principal and standby positions.
Throughout this paper, the phrase \emph{principal (spare or standby) unit} will be used to refer to units allocated in the \emph{principal (stanby)} position. Furthermore, both models include a repair unit which cannot fail.

In warm standby systems, the standby unit is understood as a warm spare, because it could fail. On the other hand, in cold
standby systems, the standby unit cannot fail and it is therefore understood as a cold spare. The following are some basic assumptions for both systems.
\begin{itemize}
	\item When a unit fails, it is immediately sent to the repair unit.
	\item If the principal unit fails, the standby unit (if not under reparation) takes its place (i.e. the spare unit becomes the principal unit).
	\item As soon as the reparation of a unit completes, the unit assumes the standby position (unless the system has already failed).
	\item Lifetimes and repair times of units are independent random variables.
	\item Reparations of units are perfect, i.e. right after its reparation finishes, the unit is restored to a state equivalent to a new unit.
	\item The system failure occurs when the principal unit fails and the other unit is under reparation.
\end{itemize}

In summary, when the cold standby system works, both units are alternately operating, and when a unit is under repair the other unit works as principal.
On the other hand, in the warm standby system, both units operate simultaneously, and the first of them that fails is immediately sent to the repairing unit. The other unit, which is still operational, is allocated in the principal position (if it was not already there). It could be possible that one unit consecutively fails and is repaired several times, while the other unit is working.
In both cold and warm standby models, the system failure occurs in the first instant in which both units are down.

In the section that follows, we will explore properties of a Markovian warm standby system, assuming that the unit in standby position could fail after an exponentially distributed time, independent from the lifetimes and repair times of the other unit.
In addition, Section \ref{subsec::cold_markovian} will examine a Markovian cold standby system where the lifetimes and repair times of both units are equally distributed.
In Sections \ref{subsecc::warm} and \ref{subsec::cold_markovian} the lifetimes and repair times of the units will depend on their allocated positions, and not on the units themselves. On the other hand, in Section \ref{2unitGral} we will undertake an analysis of cold standby systems where the lifetimes and repair times of the two units are not necessarily exponentially or equally distributed.



\section{{Markovian} warm standby system}\label{subsecc::warm}

Let $\tau^{W}$ be the lifetime of a  {warm standby system} with exponentially distributed lifetimes with means $1/\lambda_{1}$ for the principal unit and $1/\lambda_{2}$ for the secondary unit, and with exponentially distributed repair time with mean $1/\mu$. Let $\Phi^{W}(t)$ denote the survival function of $\tau^{W}$. Then, the states of the system at time $t$, denoted by $X^C(t)$, are defined as,
\[
X^C(t) =
\left\{
	\begin{array}{ccl}
    0 & \text { if } & \text{ at time }  $t$ \text{ one unit is working and the other is in standby position,}\\
    1 & \text { if } & \text{ at time } $t$ \text{ one unit is working and the other is under repair,}\\
    2 & \text { if } & \text{ at time } $t$ \text{ the two units have failed and thus the system has failed.}
	\end{array}\right.
\]

Therefore, the system states space is $E = \{0,1,2\}$. If at a given time $t$, it holds that $X^C(t) \in \{0,1\}$, then the system is functioning at time $t$, otherwise if $X^C(t) \in \{2\}$, then the system has failed at (or before) time $t$. We consider $\{2\}$ an absorbing state.
The stochastic process $\{X(t),\, t\ge 0\}$ is an homogeneous continuous-time Markov process with $Q$-matrix
\[
	Q =
	\begin{pmatrix}
    -(\lambda_1 + \lambda_2) & \lambda_1 + \lambda_2 & 0\\
    \mu &  -(\lambda_1 + \mu) & \lambda_1\\
    0 & 0 & 0
\end{pmatrix}.
\]
Let $(P_{ij}(t))$, {for $i,j = 0,1,2$,} be the transition probability matrix {of this process}.
Then, since $\Phi^{W}(t) = P_{00}(t) + P_{01}(t)$, it can be obtained that (see, e.g. \cite{boris_v._gnedenko_probabilistic_1995} pages 263-264),




\begin{eqnarray}
	\Phi^{{W}}(t) &=& \exp\left\{-\frac{(2\lambda_1+\lambda_2+\mu)t}{2}\right\}\left[\cosh\left(\frac{at}{2}\right)  + \frac{2\lambda_1+\lambda_2+\mu}{a}\sinh\left(\frac{at}{2}\right)\right],\label{distr_WSS}
\end{eqnarray}
where $a=\sqrt{(\lambda_2+\mu)^2+4\lambda_1\mu}$.

Let $\phi^{W}(t)$ and $r^{W}(t)$ be the probability density function and the hazard rate function of $\tau^W$, respectively. Then,
\begin{eqnarray}
	\phi^{{W}}(t) &=& \exp\left\{-\frac{(2\lambda_1+\lambda_2+\mu)t}{2}\right\} {\frac { 2\lambda_1\left( \lambda_1+\lambda_2 \right) }{a}}\sinh \left( \frac{at}{2} \right),\label{dens_WSS}\\
	r^{{W}}(t) &=& \frac{2\lambda_1(\lambda_1+\lambda_2)}{2\lambda_1+\lambda_2+\mu+a\coth(\frac{at}{2})}\label{hazard_WSS}.
\end{eqnarray}

We say that two {real-valued} functions $h_1(x)$ and $h_2(x)$ are equal in sign, and it is denoted by $h_1(x) =_{sg} h_2(x)$, if there is a strictly positive function $h(x)$, such that, $h_1(x) = h(x)h_2(x)$.  First, we are going to prove {a} lemma, which will be used later.

\begin{lemma}\label{LEMON}
	If $c_1 \geq c_2 \geq 0$, then the functions $\displaystyle \frac{\cosh(c_1 t)}{\cosh(c_2 t)}$, $\displaystyle \frac{\sinh(c_1 t)}{\sinh(c_2 t)}$ and $\displaystyle \frac{\coth(c_1 t)}{\coth(c_2 t)}$ are increasing for $t\geq 0$.
\end{lemma}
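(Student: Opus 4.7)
The plan is to reduce each monotonicity claim to the sign of a logarithmic derivative. Since each of the three ratios is strictly positive on $(0,\infty)$ (with the appropriate limit at $t=0$), it suffices to show that $(\log f)'(t)\ge 0$ in each case, and then invoke continuity at $t=0$.

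For $f_1(t)=\cosh(c_1t)/\cosh(c_2t)$, one computes $(\log f_1)'(t)=c_1\tanh(c_1t)-c_2\tanh(c_2t)$, so the conclusion will follow if the auxiliary function $g(c):=c\tanh(ct)$ is nondecreasing in $c\ge 0$ for each fixed $t\ge 0$. This is immediate from $g'(c)=\tanh(ct)+ct\,\operatorname{sech}^2(ct)\ge 0$. For $f_2(t)=\sinh(c_1t)/\sinh(c_2t)$, a parallel computation gives $(\log f_2)'(t)=c_1\coth(c_1t)-c_2\coth(c_2t)$, so one only needs $h(c):=c\coth(ct)$ to be nondecreasing in $c\ge 0$. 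Differentiating and rewriting,
\[
h'(c)=\coth(ct)-ct\,\operatorname{csch}^2(ct)=\frac{\tfrac12\sinh(2ct)-ct}{\sinh^2(ct)},
\]
which is nonnegative by the elementary inequality $\sinh(x)\ge x$ for $x\ge 0$.

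For $f_3(t)=\coth(c_1t)/\coth(c_2t)$, the efficient route is through the identity $\tfrac{d}{dt}\log\coth(ct)=c\tanh(ct)-c\coth(ct)=-2c/\sinh(2ct)$. This yields $(\log f_3)'(t)=2c_2/\sinh(2c_2t)-2c_1/\sinh(2c_1t)$, whose nonnegativity is equivalent to $\sinh(2c_1t)/(2c_1t)\ge \sinh(2c_2t)/(2c_2t)$. Since
\[
\frac{\sinh(x)}{x}=\sum_{k\ge 0}\frac{x^{2k}}{(2k+1)!}
\]
has nonnegative Taylor coefficients, it is nondecreasing on $[0,\infty)$, and the inequality follows from $2c_1t\ge 2c_2t\ge 0$.

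The main obstacle I foresee is the $\coth$ case: neither dividing the already-proved $\cosh$ and $\sinh$ ratios nor mimicking the auxiliary-function trick from the first two cases works straightforwardly. The key is to spot the logarithmic-derivative identity $(\log\coth(ct))' = -2c/\sinh(2ct)$, which reduces the problem to the well-known monotonicity of $\sinh(x)/x$ via its power-series representation.
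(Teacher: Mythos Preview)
Your proof is correct. The paper takes a different but equally elementary route: it computes the derivative of the ratio directly via the quotient rule, then uses the product-to-sum identities for hyperbolic functions (e.g., $2\sinh a\cosh b=\sinh(a+b)+\sinh(a-b)$) to rewrite the numerator as a manifestly nonnegative combination such as $(c_1-c_2)\sinh[(c_1+c_2)t]+(c_1+c_2)\sinh[(c_1-c_2)t]$. Your approach instead passes to logarithmic derivatives and reduces each case to the monotonicity in $c$ of an auxiliary one-variable function ($c\tanh(ct)$, $c\coth(ct)$, or equivalently $\sinh x/x$), which you verify by differentiation or by the power-series expansion. The paper's method is slightly more self-contained (one identity handles each case uniformly), while yours isolates reusable monotonicity facts; either way the $\coth$ case ultimately boils down to $\sinh x/x$ being nondecreasing. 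As a minor aside, your remark that the auxiliary-function trick ``does not work straightforwardly'' for $\coth$ is a bit pessimistic: the same device applied to $c\mapsto c\tanh(ct)-c\coth(ct)=-2c/\sinh(2ct)$ yields $x\cosh x\ge\sinh x$, which is immediate from $x\ge\tanh x$.
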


\begin{proof}

We give a proof of this result only for $\displaystyle \frac{\cosh(c_1 t)}{\cosh(c_2 t)}$. The proofs for $\displaystyle \frac{\sinh(c_1 t)}{\sinh(c_2 t)}$ and $\displaystyle \frac{\coth(c_1 t)}{\coth(c_2 t)}$ are similar.

Let $\displaystyle f(t)=\frac{\cosh(c_1 t)}{\cosh(c_2 t)}$, then
\[f'(t)=_{sg} c_1 \sinh(c_1t)\cosh(c_2t)-c_2 \sinh(c_2t)\cosh(c_1t).\]
Using properties of the hyperbolic functions, we obtain
\begin{eqnarray*}
	f'(t)&=_{sg}& c_1 \{\sinh[(c_1+c_2)t]+\sinh[(c_1-c_2)t]\} - c_2\{\sinh[(c_1+c_2)t]+\sinh[(c_2-c_1)t]\} \\
	&=& \sinh[(c_1+c_2)t][c_1-c_2] + \sinh[(c_1-c_2)t][c_1+c_2] \geq 0,
\end{eqnarray*}
for all $t\geq 0$.
\end{proof}

Let $\tau^{W}_i$ be the lifetime of a {Markovian} {warm standby system} with expected lifetimes $1/\lambda_{i1}$ for the principal unit and  $1/\lambda_{i2}$ for the secondary unit, and with expected repair time $1/\mu_i$, for $i=1,2$, res\-pectively. Let $\phi^W_i(t)$ denote the probability density function of $\tau^W_i$, for $i=1,2$. Our goal is to establish stochastic orders on the lifetimes of these systems using relations between the parameters of their distributions. Let us {denote} $a_i=\sqrt{(\lambda_{i2}+\mu_i)^2+4\lambda_{i1}\mu_i}$, for $i=1,2$, and {define} the function $(x)^+ = \max\{x, 0\}$.

\begin{proposition}\label{WSS_fun_fix_ssi}
$\tau^{W}_1 \geq_{lr} \tau^{W}_2$  if and only if  $2(\lambda_{21} - \lambda_{11}) - \lambda_{12} + \lambda_{22} + \mu_2 - \mu_1 \geq (a_2 - a_1)^+.$
\end{proposition}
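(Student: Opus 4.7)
The plan is to work directly with the ratio of densities and reduce the likelihood-ratio condition to a single supremum over $t$.

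From (\ref{dens_WSS}), the ratio of the two densities takes the form
$$\frac{\phi_1^W(t)}{\phi_2^W(t)} = K \, e^{\alpha t/2} \, \frac{\sinh(a_1 t/2)}{\sinh(a_2 t/2)},$$
where $K>0$ does not depend on $t$ and $\alpha = 2(\lambda_{21}-\lambda_{11}) - \lambda_{12} + \lambda_{22} + \mu_2 - \mu_1$. The condition $\tau_1^W \geq_{lr} \tau_2^W$ amounts to this ratio being nondecreasing on $(0,\infty)$. Taking logarithms and differentiating, that is equivalent to
$$\alpha \geq g(t) := a_2\coth(a_2 t/2) - a_1\coth(a_1 t/2) \quad \text{for every } t>0.$$
The task thus reduces to computing $\sup_{t>0} g(t)$ and verifying it equals $(a_2-a_1)^+$.

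For the endpoints, the expansion $\coth(u) = 1/u + O(u)$ as $u \to 0$ gives $g(0^+) = 0$, while $\coth(u) \to 1$ as $u \to \infty$ yields $g(+\infty) = a_2 - a_1$. To interpolate between these limits I would differentiate,
$$g'(t) = \frac{1}{2}\left[\frac{a_1^2}{\sinh^2(a_1 t/2)} - \frac{a_2^2}{\sinh^2(a_2 t/2)}\right],$$
which has the same sign as $\sinh(a_2 t/2)/a_2 - \sinh(a_1 t/2)/a_1$. Since $u \mapsto \sinh(u)/u$ is strictly increasing on $(0,\infty)$ (its power series has only nonnegative coefficients after differentiating), the sign of $g'$ is constant on $(0,\infty)$ and coincides with the sign of $a_2 - a_1$. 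Hence $g$ is monotone between its two endpoint values: increasing from $0$ to $a_2-a_1$ when $a_1 \leq a_2$, and decreasing from $0$ to $a_2-a_1\leq 0$ when $a_1 \geq a_2$. In either case $\sup_{t>0} g(t) = \max\{0, a_2-a_1\} = (a_2-a_1)^+$, which gives the stated equivalence.

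The main obstacle is precisely this supremum step: a priori $g$ could have an interior maximum strictly exceeding both endpoints, and then the threshold for $\alpha$ would be larger than $(a_2-a_1)^+$. What saves us is that $g'$ turns out to have constant sign on $(0,\infty)$, controlled by the single parameter $a_2-a_1$, which is exactly what makes the clean threshold $(a_2-a_1)^+$ appear. The remaining work is routine bookkeeping of the exponential parameters, which is what the definition of $\alpha$ above already collects.
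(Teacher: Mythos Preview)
Your argument is correct and follows essentially the same route as the paper: both reduce the likelihood-ratio condition to the nonnegativity of $h(t)=\alpha-g(t)$, establish that this function is monotone in $t$, and then read off the condition from the two endpoint limits. The only cosmetic difference is that the paper invokes Lemma~\ref{LEMON} (monotonicity of $\sinh(c_1t)/\sinh(c_2t)$) to get the monotonicity of $h$, whereas you obtain it directly from the fact that $u\mapsto\sinh(u)/u$ is increasing; these are equivalent observations.
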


\begin{proof}

Consider the ratio between the probability density functions of $\tau^{W}_1$ and $\tau^{W}_2$. Using (\ref{dens_WSS}) we obtain
\[\frac{\phi^{W}_1(t)}{\phi^{W}_2(t)} =K \frac{\exp\left\{ \frac{2 \lambda_{21} + \lambda_{22} + \mu_2}{2} t \right\} \sinh(\frac{a_1 t}{2}) }{ \exp\left\{ \frac{ 2 \lambda_{11} + \lambda_{12} + \mu_1 t}{2} \right\} \sinh(\frac{a_2 t}{2})},\]
where $K$ is a positive constant.
Then
\begin{eqnarray*}
	\left( \frac{\phi^{W}_{1}(t)}{\phi^{W}_{2}(t)} \right)' &=_{sg}& h(t) = (2\lambda_{21} + \lambda_{22} + \mu_2) + a_1 \coth\left( \frac{a_1}{2} t \right)  - ( 2 \lambda_{11} + \lambda_{12} + \mu_1) - a_2 \coth\left( \frac{a_2}{2} t \right).
\end{eqnarray*}

 Deriving {$h(t)$}, it can be checked that {this function} is increasing (decreasing) if and only if
 \[
 	\frac{\sinh\left(\frac{a_1}{2} t\right)}{\sinh\left(\frac{a_2}{2} t\right)} \leq (\geq) \frac{a_1}{a_2} = \lim\limits_{t \rightarrow 0}\, \frac{\sinh\left(\frac{a_1}{2} t\right)}{\sinh\left(\frac{a_2}{2} t\right)},
 \]
 and using Lemma \ref{LEMON} we obtain that $h(t)$ is monotone.  Then, a necessary and sufficient condition for $h(t)$ to be positive is that the following inequalities hold
\[
\begin{array}{ccl}
\lim\limits_{t \rightarrow \infty} h(t) &=& (2\lambda_{21} + \lambda_{22} + \mu_2) + a_1 - ( 2 \lambda_{11} + \lambda_{12} + \mu_1) - a_2 \ge 0,\\
\lim\limits_{t \rightarrow 0} h(t) &=& (2\lambda_{21} + \lambda_{22} + \mu_2) - ( 2 \lambda_{11} + \lambda_{12} + \mu_1) \ge 0.
\end{array}
\]
Therefore, $\tau^{W}_1 \geq_{lr} \tau^{W}_2$ if and only if $2(\lambda_{21} - \lambda_{11}) - \lambda_{12} + \lambda_{22} + \mu_2 - \mu_1 \geq (a_2 - a_1)^+.$
\end{proof}

Using Proposition \ref{WSS_fun_fix_ssi} we get a sufficient (not necessary) condition for $\tau^{W}_1 \geq_{lr} \tau^{W}_2$ to hold.

\begin{corollary}\label{WSS_fun_fix}
	If $\mu_1 \ge \mu_2$, $\lambda_{11} \leq \lambda_{21}$ and
	\begin{equation}\label{condt1}
		2(\lambda_{21} - \lambda_{11}) - \lambda_{12} + \lambda_{22} + \mu_2 - \mu_1 \ge 0,
	\end{equation}
	then $\tau^{W}_1 \geq_{lr} \tau^{W}_2$.
\end{corollary}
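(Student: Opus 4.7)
The plan is to invoke Proposition~\ref{WSS_fun_fix_ssi} and verify that its condition $B_2 - B_1 \ge (a_2 - a_1)^+$ follows from the three hypotheses, where I write $B_i := 2\lambda_{i1} + \lambda_{i2} + \mu_i$. Hypothesis~(\ref{condt1}) is precisely $B_2 - B_1 \ge 0$, so if $a_1 \ge a_2$ there is nothing left to prove. The substantive case is $a_2 > a_1$, in which I must establish the sharper inequality $B_2 - B_1 \ge a_2 - a_1$.

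In that case I would first clear the square roots. Multiplying by the positive quantity $a_1 + a_2$ and using the elementary identity $B_i^2 - a_i^2 = 4\lambda_{i1}(\lambda_{i1}+\lambda_{i2})$, which follows from expanding the definition of $a_i$, the goal becomes
\[
C_2 - C_1 \;\ge\; (B_2 - B_1)(F_1 + F_2),
\]
with $C_i := 4\lambda_{i1}(\lambda_{i1}+\lambda_{i2})$ and $F_i := B_i - a_i \ge 0$. The trivial lower bound $a_i \ge \lambda_{i2} + \mu_i$ yields $F_i \le 2\lambda_{i1}$, so it suffices to prove the cleaner inequality $C_2 - C_1 \ge 2(B_2 - B_1)(\lambda_{11}+\lambda_{21})$.

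To close the argument, I would expand $C_2 - C_1$ using the symmetric factorization $\lambda_{21}\lambda_{22} - \lambda_{11}\lambda_{12} = \tfrac{1}{2}[(\lambda_{11}+\lambda_{21})(\lambda_{22}-\lambda_{12}) + (\lambda_{12}+\lambda_{22})(\lambda_{21}-\lambda_{11})]$. A direct computation then gives
\[
C_2 - C_1 - 2(B_2 - B_1)(\lambda_{11}+\lambda_{21}) \;=\; 2(\lambda_{12}+\lambda_{22})(\lambda_{21}-\lambda_{11}) + 2(\lambda_{11}+\lambda_{21})(\mu_1-\mu_2),
\]
which is nonnegative by the hypotheses $\lambda_{11} \le \lambda_{21}$ and $\mu_1 \ge \mu_2$. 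I anticipate the main obstacle to be spotting this symmetric splitting of $\lambda_{21}\lambda_{22} - \lambda_{11}\lambda_{12}$; without it, the way the three hypotheses combine to dominate $a_2 - a_1$ in the case $a_2 > a_1$ remains opaque.
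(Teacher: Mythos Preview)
Your proof is correct and follows essentially the same route as the paper's. Both arguments invoke Proposition~\ref{WSS_fun_fix_ssi}, multiply the target inequality $B_2 - B_1 \ge a_2 - a_1$ by $a_1 + a_2$ to eliminate the square roots, use the elementary bound $a_i \ge \lambda_{i2} + \mu_i$ (equivalently your $F_i \le 2\lambda_{i1}$), and arrive at the identical nonnegative expression $2(\lambda_{12}+\lambda_{22})(\lambda_{21}-\lambda_{11}) + 2(\lambda_{11}+\lambda_{21})(\mu_1-\mu_2)$; your introduction of the auxiliary quantities $C_i$, $F_i$ and the symmetric splitting of $\lambda_{21}\lambda_{22}-\lambda_{11}\lambda_{12}$ is a cosmetic reorganization of the same computation.
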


\begin{proof}

Let us define $c_i = 2\lambda_{i1} + \lambda_{i2} + \mu_i$, for $i=1,2$. Due to Proposition \ref{WSS_fun_fix_ssi}, as (\ref{condt1}) holds, it is sufficient to check the inequality $c_2 - c_1 \geq a_2 - a_1.$
Since
\begin{eqnarray}
a_2^2 - a_1^2 &=& (\lambda_{22} + \lambda_{12} + \mu_2 + \mu_1)(\lambda_{22} + \mu_2 - \lambda_{12} - \mu_1) + 4(\lambda_{21} \mu_2 - \lambda_{11}\mu_1),\label{expr1a}\\
&=& (\lambda_{22} + \lambda_{12} + \mu_2 + \mu_1)[c_2 - c_1 -2(\lambda_{21} - \lambda_{11})] + 4(\lambda_{21} \mu_2 - \lambda_{11}\mu_1).\label{expr1}
\end{eqnarray}
we obtain
\begin{eqnarray}
(c_2 - c_1 + a_1 - a_2)(a_1 + a_2) &=& (c_2 - c_1)(a_1 + a_2) + a_1^2 - a_2^2\nonumber\\
	&=& (c_2 - c_1)[a_1 + a_2 - (\lambda_{22} + \lambda_{12} + \mu_2 + \mu_1)]\nonumber\\
	& & + 2(\lambda_{21} - \lambda_{11})(\lambda_{22} + \lambda_{12} + \mu_2 + \mu_1) - 4(\lambda_{21}\mu_2 - \lambda_{11}\mu_1)\label{lasttime1}\\
	&\ge& 2(\lambda_{21} - \lambda_{11})(\lambda_{22} + \lambda_{12}) + 2\lambda_{21}(\mu_1 - \mu_2) - 2\lambda_{11}(\mu_2 - \mu_1)\label{lasttime2}\\
	&=& 2(\lambda_{21} - \lambda_{11})(\lambda_{22} + \lambda_{12}) + 2(\mu_{1} - \mu_{2})(\lambda_{21} + \lambda_{11})\label{expr2}\\
	&\ge& 0,\nonumber
\end{eqnarray}
where the equality (\ref{lasttime1}) comes from (\ref{expr1}), the inequality (\ref{lasttime2}) is true due to $a_i \ge \mu_i + \lambda_{i2}$, for $i=1,2$, and (\ref{expr2}) is nonnegative due to $\lambda_{11} \le \lambda_{21}$ and $\mu_1 \ge \mu_2$. Thus $c_2 - c_1 \ge a_2 - a_1$ holds and consequently $\tau^W_1 \ge_{lr} \tau^W_2$.
\end{proof}

We will refer to the warm standby system with lifetime $\tau^{W}_i$ as the system $i$, for $i=1,2$.
{From Corollary \ref{WSS_fun_fix} we can state some intuitive results which are not that easy to infer from Proposition \ref{WSS_fun_fix_ssi}.}
For example, it is not difficult to prove that $\tau^{W}_1 \geq_{lr} \tau^{W}_2$ holds
when the hazard rate of the lifetimes of the units of system $1$ are smaller than the hazard rate of the ones of system $2$, and the repair times are  stochastically equal for both systems, i.e. $\lambda_{11} \leq \lambda_{21}$, $\lambda_{12} \leq \lambda_{22}$ and $\mu_1 = \mu_2$.
{
{Suppose that we have two units with hazard rates $\lambda_1$ and $\lambda_2$, respectively, and we want to decide which one to put on the principal position}. Thus, we want to compare $\tau^W_1$ and $\tau^W_2$ when $\lambda_{11} = \lambda_{22} = \lambda_1$ and $\lambda_{12} = \lambda_{21} = \lambda_2$. Using Corollary \ref{WSS_fun_fix} we obtain the relation $\tau^W_1 \ge_{lr} \tau^W_2$ when $\lambda_2 - \lambda_1 \ge \mu_1 - \mu_2 \ge 0$. If both systems have stochastically equal repair times $\tau^W_1 \ge_{lr} \tau^W_2$ is equivalent to $\lambda_1 \le \lambda_2$. So, it is better the system with the smallest hazard rate in the principal position.}

The following example shows that if $\mu_1 = \mu_2$ we can obtain $\tau^{W}_1 \geq_{lr} \tau^{W}_2$ even when $\lambda_{11} \ge \lambda_{21}$.
\begin{example}\label{nenita3}
Suppose that $\mu_1 = \mu_2 = \mu$. Let us define $r_1 = \lambda_{11} - \lambda_{21} \ge 0$, $r_2 = \lambda_{22} - \lambda_{12} \ge 0$ and the function $a_i(\mu)$ as $$a_i(\mu) = \sqrt{(\lambda_{i2}+\mu)^2+4\lambda_{i1}\mu}.$$
We know from Proposition \ref{WSS_fun_fix_ssi} that $r_2 \ge 2r_1$ is a necessary condition for $\tau^W_1 \ge_{lr} \tau^W_2$ to hold. Assume that $r_2 > 2r_1$. We will show that $\tau^W_1 \ge_{lr} \tau^W_2$ holds for $\mu$ sufficiently large.

Using (\ref{expr1a}) we have
\[a_2^2(\mu) - a_1^2(\mu) = r_2(\lambda_{12} + \lambda_{22}) + 2\mu(r_2 - 2 r_1) \ge 0.\]
It is easy to see that
\begin{eqnarray}
	\lim\limits_{\mu \rightarrow 0} (a_2(\mu) - a_1(\mu)) &=& r_2,\label{exa2}\\
	\lim\limits_{\mu \rightarrow \infty} (a_2(\mu) - a_1(\mu)) &=& r_2 - 2r_1 \le r_2,\label{exa3}\\
	\frac{d}{d\mu}(a_2(\mu) - a_1(\mu)) &=& \frac{\mu + \lambda_{22} + 2\lambda_{21}}{a_2(\mu)} - \frac{\mu + \lambda_{12} + 2\lambda_{11}}{a_1(\mu)}.\label{exa4}
\end{eqnarray}

Our interest is to find values of $\mu$ such that $r_2 - 2r_1 \ge a_2(\mu) - a_1(\mu)$ {because} then $\tau^W_1 \ge_{lr} \tau^W_2$. From (\ref{exa2}) and (\ref{exa3}), it is sufficient to prove that $a_2(\mu) - a_1(\mu)$ is increasing in a neighborhood of infinity for $r_2 - 2r_1 \ge a_2(\mu) - a_1(\mu)$ to hold in the same neighborhood. To prove this note that
\[ \lim\limits_{\mu \rightarrow \infty} \frac{a_1(\mu)}{a_2(\mu)}  = 1 > \frac{\lambda_{12} + 2\lambda_{11} + \mu}{\lambda_{22}+ 2\lambda_{21} + \mu},\]
where the last inequality is true due to $r_2 > 2r_1$.

Consider a sufficiently small  $\delta > 0$ and  a sufficiently large $M > 0$, such that if $\mu > M$ it holds that
\[ \frac{a_1(\mu)}{a_2(\mu)} \ge \frac{\lambda_{12} + 2\lambda_{11} + \mu}{\lambda_{22}+ 2\lambda_{21} + \mu} + \delta.\]
{Using (\ref{exa4}) it can be seen that this last inequality is equivalent to}
\[\frac{d}{d\mu}(a_2(\mu) - a_1(\mu)) \ge \delta \frac{\lambda_{22} + 2\lambda_{21} + \mu}{a_1(\mu)} > 0.\]
Then, for $\mu > M$ the function $a_2(\mu) - a_1(\mu)$ is increasing and consequently $\tau^W_1 \ge_{lr} \tau^W_2$.

Figure \ref{fig::figure1} shows a graph of $a_2(\mu) - a_1(\mu)$ with $\lambda_{11} = 1.5$, $\lambda_{22} = 3$, $\lambda_{12} = \lambda_{21} = 1$. In this case $a_2(\mu) - a_1(\mu) \le r_2 - 2r_1 = 1$ is satisfied {for $\mu \ge 11.25$ and consequently $\tau^W_1 \ge_{lr} \tau^W_2$ {for those values}. Note that the function $a_2(\mu) - a_1(\mu)$ is increasing for $\mu \ge 26.49$.}

\begin{figure}[t]
	\centering
	\includegraphics[width=0.6\linewidth]{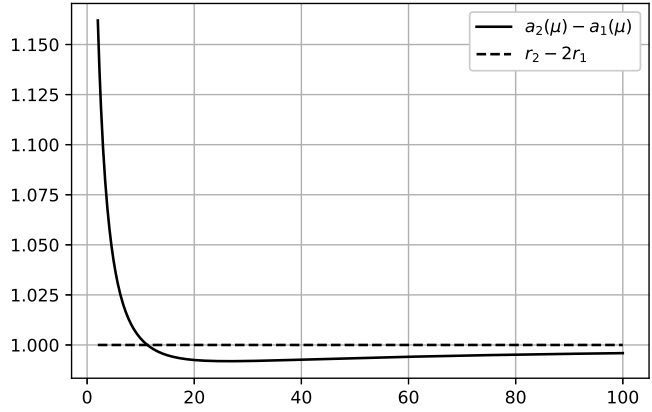}
	\caption{Graph of $a_2(\mu) - a_1(\mu)$ and the line $y = r_2 - 2r_1$ for $\lambda_{11} = 1.5$, $\lambda_{22} = 3$, $\lambda_{12} = \lambda_{21} = 1$.}
	\label{fig::figure1}
\end{figure}

\end{example}

\begin{remark}
	From Corollary \ref{WSS_fun_fix} and Example \ref{nenita3} we obtain that if $\mu_1 = \mu_2 = \mu$ is sufficiently large, then the condition $2(\lambda_{21} - \lambda_{11}) - \lambda_{12} + \lambda_{22} \ge 0$ is necessary and sufficient for $\tau^W_1 \ge_{lr} \tau^W_2$ to hold.
\end{remark}

Next, we assume that the units of both systems have stochastically equal lifetimes.
\begin{proposition}\label{WSS_rep_fix}
	Suppose that $\lambda_{1i} = \lambda_{2i} = \lambda_i$, for $i=1,2$. Then $\mu_1 \geq \mu_2$ if and only if $\tau^{W}_{1} \geq_{hr} \tau^{W}_{2}$.
\end{proposition}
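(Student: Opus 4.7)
The plan is to read off the condition $\tau^W_1 \ge_{hr} \tau^W_2$ directly from the explicit hazard rate formula \eqref{hazard_WSS} and reduce the problem to a monotonicity-in-$\mu$ statement about a single auxiliary function. Since $\lambda_{i1} = \lambda_{i2}$ is common to both systems, the numerators $2\lambda_1(\lambda_1+\lambda_2)$ in \eqref{hazard_WSS} are the same, so $r^W_1(t) \le r^W_2(t)$ is equivalent to
\[
\mu_1 + a_1\coth\!\left(\tfrac{a_1 t}{2}\right) \;\ge\; \mu_2 + a_2\coth\!\left(\tfrac{a_2 t}{2}\right) \qquad \text{for all } t>0,
\]
where $a_i=\sqrt{(\lambda_2+\mu_i)^2+4\lambda_1\mu_i}$. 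Both directions then amount to analyzing the one-variable function $\phi_t(\mu)=\mu+a(\mu)\coth(a(\mu)t/2)$ with $a(\mu)=\sqrt{(\lambda_2+\mu)^2+4\lambda_1\mu}$.

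For the sufficiency direction ($\mu_1\ge\mu_2 \Rightarrow \tau^W_1\ge_{hr}\tau^W_2$), I would first observe $a'(\mu)=(\lambda_2+\mu+2\lambda_1)/a(\mu)>0$, and then compute
\[
\phi'_t(\mu) \;=\; 1 + a'(\mu)\Bigl[\coth(x) - x\,\mathrm{csch}^2(x)\Bigr], \qquad x:=\tfrac{a(\mu)t}{2}.
\]
The bracket is nonnegative because $\coth(x)-x\,\mathrm{csch}^2(x) = (\tfrac12\sinh(2x)-x)/\sinh^2(x) \ge 0$ (using $\sinh(2x)\ge 2x$ for $x\ge 0$, which follows termwise from the power series). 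Hence $\phi'_t(\mu)\ge 1>0$ for every fixed $t>0$, so $\phi_t$ is strictly increasing in $\mu$. This gives $\phi_t(\mu_1)\ge\phi_t(\mu_2)$ and therefore the hazard rate order.

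For the necessity direction, I would take $t\to\infty$ in \eqref{hazard_WSS}. Since $\coth(a t/2)\to 1$, the limit
\[
r^W(\infty) \;=\; \frac{2\lambda_1(\lambda_1+\lambda_2)}{2\lambda_1+\lambda_2+\mu+a(\mu)}
\]
exists, and the denominator is strictly increasing in $\mu$ (both $\mu$ and $a(\mu)$ are), so $r^W(\infty)$ is strictly decreasing in $\mu$. If $\tau^W_1\ge_{hr}\tau^W_2$, then $r^W_1(\infty)\le r^W_2(\infty)$, and strict monotonicity forces $\mu_1\ge\mu_2$.

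The main obstacle is the sufficiency step, since the interaction between $\mu$, $a(\mu)$ and $\coth(a(\mu)t/2)$ mixes a monotone increasing factor with a monotone decreasing one. The trick that unlocks it is the clean pointwise identity $\coth(x)\ge x\,\mathrm{csch}^2(x)$, which makes the derivative $\phi'_t(\mu)$ at least $1$ uniformly in $t$; everything else is a direct limit computation.
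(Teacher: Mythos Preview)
Your proof is correct and follows essentially the same route as the paper: both reduce $r^W_1(t)\le r^W_2(t)$ to the inequality $\mu_1+a_1\coth(a_1t/2)\ge\mu_2+a_2\coth(a_2t/2)$ via the explicit hazard rate \eqref{hazard_WSS}. The paper merely asserts that this inequality is equivalent to $\mu_1\ge\mu_2$, whereas you supply the details---the derivative bound $\phi_t'(\mu)\ge 1$ using $\coth(x)\ge x\,\mathrm{csch}^2(x)$ for sufficiency, and the $t\to\infty$ limit for necessity---so your argument is strictly more complete than the paper's own proof.
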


\begin{proof}

Let $r^{{W}}_i(t)$ be the hazard rate of $\tau^{W}_i$, we have \[r^{W}_{i}(t)=\frac{2\lambda_1(\lambda_1+\lambda_2)}{2\lambda_1+\lambda_2+\mu_i+a_{i}\coth(\frac{a_{i}t}{2})},\]
for $i=1,2$. {Note that $r^{W}_{{1}}(t) \leq r^{W}_{{2}}(t)$ is equivalent to
\[\mu_1 + a_{1}\coth\left(\frac{a_{1}t}{2}\right) \geq \mu_2 + a_{2}\coth\left(\frac{a_{2}t}{2}\right)\]
and this last inequality is fulfilled if and only if $\mu_1 \ge \mu_2$.}
\end{proof}

Under the hypothesis of Proposition \ref{WSS_rep_fix} the ordering $\tau^{W}_{1} \ge_{lr} \tau^{W}_{2}$ does not hold. To show that let us analyze the likelihood ratio
\[ \frac{\phi^{W}_{{1}}(t)}{\phi^{W}_{{2}}(t)} = \frac{a_1}{a_2} \frac{\exp\left\{\frac{\mu_2}{2} t\right\}\sinh(\frac{a_{1} t}{2})}{\exp\left\{\frac{\mu_1}{2} t\right\}\sinh(\frac{a_{2} t}{2})}.\]

{Note that}
\[ \left( \frac{\phi^{W}_{{1}}(t)}{\phi^{W}_{{2}}(t)} \right)' =_{sg} g(t) = \mu_2 +a_{1} \coth\left(\frac{a_{1}t}{2}\right)-\mu_1 - a_{2} \coth\left(\frac{a_{2}t}{2}\right).\]
Moreover, $g(t)$ is increasing and
\begin{eqnarray*}
	\lim_{t\rightarrow 0} g(t) &=& \mu_2 - \mu_1 < 0,\\
	\lim_{t\rightarrow \infty} g(t) &=& \mu_2 - \mu_1 +a_{1} -a_{2} > 0.
\end{eqnarray*}
So, the likelihood ratio $\displaystyle \frac{\phi^{W}_{{1}}(t)}{\phi^{W}_{{2}}(t)}$ is not increasing in $[0,\infty)$, and as a consequence $\tau^{W}_{1} \ngeq_{lr} \tau^{W}_{2}$.

%

From Corollary \ref{WSS_fun_fix} and Proposition \ref{WSS_rep_fix} we obtain

\begin{corollary}\label{WSS_general}
	If $\lambda_{11} \leq \lambda_{21}$, $\mu_1 \geq \mu_2$ and $2(\lambda_{21} - \lambda_{11}) \ge \lambda_{12} - \lambda_{22},$
	then $\tau^{W}_1 \geq_{hr} \tau^{W}_2$.
\end{corollary}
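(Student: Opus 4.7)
The plan is to chain Corollary~\ref{WSS_fun_fix} with Proposition~\ref{WSS_rep_fix} via an intermediate auxiliary system, so that the two previously established orderings connect $\tau^{W}_1$ and $\tau^{W}_2$ through a common bridge. Specifically, I would introduce a third warm standby system whose principal and standby hazard rates coincide with those of system $2$, but whose repair rate coincides with that of system $1$. Denote its lifetime by $\tau^{W}_3$, so its parameters are $\lambda_{31} = \lambda_{21}$, $\lambda_{32} = \lambda_{22}$ and $\mu_3 = \mu_1$.

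First I would compare $\tau^{W}_1$ with $\tau^{W}_3$ via Corollary~\ref{WSS_fun_fix}. Since both systems share repair rate $\mu_1$, the requirement $\mu_1 \ge \mu_3$ holds trivially; $\lambda_{11} \le \lambda_{31} = \lambda_{21}$ is part of the hypothesis; and inequality~(\ref{condt1}) reduces to
\[
2(\lambda_{31} - \lambda_{11}) - \lambda_{12} + \lambda_{32} + \mu_3 - \mu_1 = 2(\lambda_{21} - \lambda_{11}) - (\lambda_{12} - \lambda_{22}) \ge 0,
\]
which is exactly the remaining hypothesis. Hence $\tau^{W}_1 \ge_{lr} \tau^{W}_3$, and in particular $\tau^{W}_1 \ge_{hr} \tau^{W}_3$.

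Second, I would compare $\tau^{W}_3$ with $\tau^{W}_2$ via Proposition~\ref{WSS_rep_fix}. By construction the two systems have stochastically equal lifetimes for their units, i.e. $\lambda_{3i} = \lambda_{2i}$ for $i = 1,2$, and $\mu_3 = \mu_1 \ge \mu_2$ by hypothesis, so the proposition yields $\tau^{W}_3 \ge_{hr} \tau^{W}_2$. Transitivity of the hazard rate order then delivers $\tau^{W}_1 \ge_{hr} \tau^{W}_2$, completing the argument.

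Given the earlier results, this reduction is essentially a bookkeeping exercise rather than a substantive calculation, so I do not anticipate any real obstacle; the only minor care needed is choosing the auxiliary parameters so that the two sets of hypotheses line up simultaneously, which the split of parameters above accomplishes.
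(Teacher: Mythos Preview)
Your argument is correct and is exactly what the paper intends: it simply records that the result follows ``from Corollary~\ref{WSS_fun_fix} and Proposition~\ref{WSS_rep_fix}'', and your auxiliary system $\tau^{W}_3$ with parameters $(\lambda_{21},\lambda_{22},\mu_1)$ is the natural bridge that makes the two hypotheses line up and allows transitivity of $\le_{hr}$ to finish the job.
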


\section{{Markovian} cold standby system}\label{subsec::cold_markovian}

Let $\tau^{C}$ be the lifetime of a {cold standby system} with expected lifetime $1/\lambda$ for the principal unit and with expected repair time $1/\mu$. Let $\Phi^{C}(t)$, $\phi^{C}(t)$ and $r^{C}(t)$  be the survival function, the probability density function and the hazard rate functions of $\tau^{C}$, respectively.
Taking $\lambda_2 = 0$ in (\ref{distr_WSS}), (\ref{dens_WSS}) and (\ref{hazard_WSS}) we obtain
\begin{eqnarray*}
	\Phi^{{C}}(t) &=& \exp\left\{-\frac{(2\lambda+\mu)t}{2}\right\}\left[\cosh\left(\frac{bt}{2}\right) + \frac{2\lambda+\mu}{b}\sinh\left(\frac{bt}{2}\right)\right],\\
	\phi^{{C}}(t) &=& \exp\left\{-\frac{(2\lambda+\mu)t}{2}\right\} {\frac { 2\lambda^2 }{b}\sinh \left( \frac{bt}{2} \right)},\\
	r^{C}(t) &=& \frac{2\lambda^2}{b\coth(\frac{bt}{2})+(2\lambda+\mu)},
\end{eqnarray*}
where $b = \sqrt{\mu(4\lambda + \mu)}$.

Let $\tau^{C}_i$ be the lifetime of a  {cold standby system} with expected lifetime $1/\lambda_{i}$ for the principal unit and expected repair time $1/\mu_i$, for $i=1,2$, respectively. Let $b_i = \sqrt{\mu_i^2 + 4\lambda_i \mu_i}$, for $i=1,2$. From Proposition \ref{WSS_fun_fix_ssi}, taking $\lambda_{i2} = 0$, for $i=1,2$, we have the following result to compare the lifetimes of two  {cold standby systems} in the likelihood ratio order.

\begin{proposition}\label{nenita2}
 The relation  $\tau_1^{C} \geq_{lr} \tau_2^{C}$ is satisfied if and only if  $2(\lambda_2 - \lambda_1) + \mu_2 - \mu_1 \geq (b_2 - b_1)^+$.
 \end{proposition}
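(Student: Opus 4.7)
The plan is to recognize the cold standby system as the degenerate warm standby system in which the standby unit cannot fail, i.e.\ the case $\lambda_{i2}=0$, and then to invoke Proposition~\ref{WSS_fun_fix_ssi} verbatim, exactly as suggested in the paragraph preceding the statement.

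First I would verify that this identification is valid at the level of distributions. Setting $\lambda_2 = 0$ and relabelling $\lambda_1 \mapsto \lambda$ in the warm-standby density (\ref{dens_WSS}) collapses the coefficient $2\lambda_1(\lambda_1+\lambda_2)/a$ into $2\lambda^2/b$, and the constant $a=\sqrt{(\lambda_2+\mu)^2 + 4\lambda_1\mu}$ into $b=\sqrt{\mu(4\lambda+\mu)}$, reproducing exactly the expression of $\phi^C(t)$ displayed just above the statement. Consequently $\tau^C_i$ has the same distribution as the warm-standby lifetime $\tau^W_i$ corresponding to the parameter choice $(\lambda_{i1},\lambda_{i2},\mu_i) = (\lambda_i,0,\mu_i)$, and in particular $b_i$ is precisely $a_i$ evaluated at $\lambda_{i2}=0$.

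Substituting $\lambda_{i2}=0$ and $\lambda_{i1}=\lambda_i$ into the equivalent condition of Proposition~\ref{WSS_fun_fix_ssi}, namely $2(\lambda_{21}-\lambda_{11}) - \lambda_{12} + \lambda_{22} + \mu_2 - \mu_1 \ge (a_2-a_1)^+$, produces exactly $2(\lambda_2-\lambda_1)+\mu_2-\mu_1 \ge (b_2-b_1)^+$. The if-and-only-if character is inherited directly because Proposition~\ref{WSS_fun_fix_ssi} is itself a characterization.

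There is no substantive obstacle here, since the analytic work (monotonicity of the likelihood ratio through Lemma~\ref{LEMON} and evaluation of the limits of the auxiliary function at $0$ and $\infty$) has already been carried out in the proof of Proposition~\ref{WSS_fun_fix_ssi}. The only point worth a quick sanity check is that the specialization $\lambda_{i2}=0$ does not degenerate any step of that earlier argument: Lemma~\ref{LEMON} is stated for $c_1 \ge c_2 \ge 0$ so it still applies, and the limits of the function $h(t)$ at $0$ and $\infty$ remain finite and well-defined as long as $\lambda_i,\mu_i>0$, which guarantees $b_i>0$.
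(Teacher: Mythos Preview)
Your proposal is correct and follows exactly the paper's approach: the paper obtains this proposition directly by specializing Proposition~\ref{WSS_fun_fix_ssi} with $\lambda_{i2}=0$ for $i=1,2$, which is precisely what you do. Your additional checks (that the density formulas match and that Lemma~\ref{LEMON} and the limit arguments remain valid under this specialization) are sound and, if anything, more careful than the paper, which simply asserts the reduction without further comment.
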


From Proposition \ref{nenita2} a necessary condition to obtain $\tau^{C}_1 \geq_{lr} \tau^{C}_2$ is
\( 2(\lambda_2 - \lambda_1)  \geq \mu_1 - \mu_2. \)
Under $\tau^{C}_1 \geq_{lr} \tau^{C}_2$, if we suppose $\lambda_1 \geq \lambda_2$, then necessarily $\mu_1 \leq \mu_2$ and thus as we will see in Proposition \ref{prop8}, $\tau^{C}_1 \leq_{hr} \tau^{C}_2$ holds, and consequently $\tau^{C}_1 \ngeq_{lr} \tau^{C}_2$. Hence, $\lambda_1 \leq \lambda_2$ is a necessary condition to obtain  $\tau^{C}_1 \geq_{lr} \tau^{C}_2$. {Note that this result and the one which we have obtained for the warm standby system are not similar {because Example \ref{nenita3} shows} that $\lambda_{11} \le \lambda_{21}$ is not a necessary condition for $\tau^W_1 \ge_{lr} \tau^W_2$ to hold.}

As a consequence of Proposition \ref{nenita2} we have

\begin{corollary}
	If $2(\lambda_2 - \lambda_1) \geq \mu_1 - \mu_2$ and $\mu_1 \geq \mu_2$, then $\tau_1^{C} \geq_{lr} \tau_2^{C}$.
\end{corollary}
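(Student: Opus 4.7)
The plan is to apply Proposition \ref{nenita2}, which reduces the corollary to verifying the inequality $2(\lambda_2 - \lambda_1) + \mu_2 - \mu_1 \geq (b_2 - b_1)^+$. First I would observe that combining the two hypotheses gives $2(\lambda_2-\lambda_1) \geq \mu_1 - \mu_2 \geq 0$, so the left-hand side $2(\lambda_2-\lambda_1)+\mu_2-\mu_1$ is nonnegative (and, as a byproduct, $\lambda_1 \leq \lambda_2$). Consequently, in the easy case $b_2 \leq b_1$ the right-hand side is $0$ and the inequality is immediate.

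The substantive case is $b_2 > b_1$, where I need to establish the strict-looking inequality $2(\lambda_2-\lambda_1)+\mu_2-\mu_1 \geq b_2 - b_1$. This is the cold-standby specialization of the algebraic step performed inside the proof of Corollary \ref{WSS_fun_fix}: setting $\lambda_{i2}=0$ turns $a_i$ into $b_i$ and turns $c_i = 2\lambda_{i1}+\lambda_{i2}+\mu_i$ into $c_i = 2\lambda_i + \mu_i$. I would therefore mirror that argument, multiplying both sides by $b_1+b_2>0$ and using $b_2^2-b_1^2 = (\mu_1+\mu_2)(\mu_2-\mu_1) + 4(\lambda_2\mu_2 - \lambda_1\mu_1)$ to rewrite $(c_2-c_1+b_1-b_2)(b_1+b_2)$ as a sum that factors, following the pattern of lines \eqref{lasttime1}--\eqref{expr2}, into
\[
(c_2-c_1)\bigl[b_1+b_2-(\mu_1+\mu_2)\bigr] + 2(\mu_1-\mu_2)(\lambda_1+\lambda_2).
\]
The first summand is nonnegative because $c_2-c_1 = 2(\lambda_2-\lambda_1)+\mu_2-\mu_1\geq 0$ by hypothesis, and because $b_i \geq \mu_i$ (directly from $b_i^2 = \mu_i^2+4\lambda_i\mu_i \geq \mu_i^2$); the second is nonnegative because $\mu_1\geq \mu_2$.

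I do not expect any real obstacle: the only work is the algebraic identity, and it is precisely the $\lambda_{i2}=0$ reduction of a computation already carried out in the paper. In fact one could, alternatively, simply invoke Corollary \ref{WSS_fun_fix} with $\lambda_{i2}=0$, since the hypotheses here imply all three hypotheses there (the condition $\lambda_1 \leq \lambda_2$ being a consequence, as noted above). I would present the short self-contained version so that the cold-standby section does not depend on chasing through the warm-standby proof.
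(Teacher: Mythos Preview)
Your proposal is correct and follows essentially the same route as the paper: reduce via Proposition~\ref{nenita2} and then replay the algebra of Corollary~\ref{WSS_fun_fix} with $\lambda_{i2}=0$, arriving at the expression $(c_2-c_1)[b_1+b_2-(\mu_1+\mu_2)]+2(\mu_1-\mu_2)(\lambda_1+\lambda_2)$, which is exactly \eqref{expr2} in this specialization. The only nuance worth noting is that the paper emphasizes that $\lambda_1\le\lambda_2$ is \emph{not needed} for the argument (the term $2(\lambda_{21}-\lambda_{11})(\lambda_{22}+\lambda_{12})$ in \eqref{expr2} vanishes when $\lambda_{i2}=0$), whereas you derive $\lambda_1\le\lambda_2$ as a byproduct of the hypotheses; both viewpoints are fine, and your alternative of invoking Corollary~\ref{WSS_fun_fix} directly is also valid precisely because that inequality is in fact implied.
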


\begin{proof}

The proof is similar to {that} of Corollary \ref{WSS_fun_fix} taking $\lambda_{12} = \lambda_{22} = 0$ and noting that (\ref{expr2}) is nonnegative when $\mu_1 \geq \mu_2$. The inequality $\lambda_1 \le \lambda_{2}$ ( $\lambda_{11} \le \lambda_{21}$ in the proof of Corollary \ref{WSS_fun_fix}) is not necessary due to $\lambda_{12} = \lambda_{22} = 0$.
\end{proof}

{The following example shows} that the inequality $\mu_1 \ge \mu_2$ is not necessary to obtain $\tau_1^{C} \geq_{lr} \tau_2^{C}$.

\begin{example}\label{nenita16}
	Let us suppose that $\mu_1 \le \mu_2$ and $\lambda_1 \le \lambda_2$. Then $b_1 \le b_2$ and, using Proposition \ref{nenita2}, we only {need to find out conditions for $2(\lambda_2 - \lambda_1) + \mu_2 - \mu_1 \geq b_2 - b_1$ to hold}. But $b_1 \ge \mu_1$ and
	\begin{eqnarray*}
		b_2 &=& \sqrt{\mu_2^2 + 4\lambda_2 \mu_2}\\
			&=& \sqrt{(\mu_2 + \lambda_2)^2 + 2 \lambda_2 \mu_2 - \lambda_2^2}\\
			&\le& \mu_2 + \lambda_2 \;\; \mbox{ if } \; \; 2\mu_2 \leq \lambda_2.
	\end{eqnarray*}
	So, $b_2 - b_1 \le \mu_2 + \lambda_2 - \mu_1$ and it is sufficient to check that \(2(\lambda_2 - \lambda_1) + \mu_2 - \mu_1 \geq \lambda_2 + \mu_2 - \mu_1,\) or equivalently, that $\lambda_2 \ge 2 \lambda_1$. Finally, we {get} $\tau_1^{C} \geq_{lr} \tau_2^{C}$ if
	\(
		\mu_1 \le \mu_2 \mbox{ and } \lambda_2 \ge 2\max\{\lambda_1, \mu_2\}.
	\)
	
	
	
\end{example}

We will refer to the cold standby system with lifetime $\tau^{C}_i$ as the system $i${, for $i=1,2$.}
In particular, when the units of the system $1$ have stochastically greater lifetimes than the units of the system $2$ ($\lambda_1 \leq \lambda_2$) and they have stochastically equal repair times ($\mu_1 = \mu_2$), we obtain $\tau_1^{C} \geq_{lr} \tau_2^{C}$. When $\lambda_1 = \lambda_2$, we {get an ordering between the lifetimes of both systems} in the sense of the hazard rate, as a consequence of Proposition \ref{WSS_rep_fix}.

\begin{proposition}\label{CSS_fix_func}
	Suppose that $\lambda_{1} = \lambda_{2}$. Then $\mu_1 \geq \mu_2$ if and only if $\tau^{C}_{1} \geq_{hr} \tau^{C}_{2}$.
\end{proposition}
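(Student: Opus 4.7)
The plan is to deduce this result as an immediate specialization of Proposition \ref{WSS_rep_fix}. As already observed just above Proposition \ref{nenita2}, the survival, density, and hazard rate formulas for the cold standby system are recovered from the warm standby formulas (\ref{distr_WSS}), (\ref{dens_WSS}), (\ref{hazard_WSS}) by setting $\lambda_{i2} = 0$. Equivalently, a cold standby system with parameters $(\lambda_i, \mu_i)$ has the same lifetime distribution as a warm standby system with parameters $(\lambda_{i1}, \lambda_{i2}, \mu_i) = (\lambda_i, 0, \mu_i)$, since a warm standby unit whose failure rate in reserve is zero cannot fail while standing by, which is precisely the cold standby behavior.

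Under the hypothesis $\lambda_1 = \lambda_2$, the induced warm standby parameters satisfy $\lambda_{11} = \lambda_{21}$ and $\lambda_{12} = \lambda_{22} = 0$, so the assumptions of Proposition \ref{WSS_rep_fix} hold. That proposition then gives $\mu_1 \ge \mu_2$ if and only if $\tau_1^W \ge_{hr} \tau_2^W$, which, in this special case, is exactly $\tau_1^C \ge_{hr} \tau_2^C$. No further computation is required.

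If a self-contained argument were preferred, one would work directly with $r^C_i(t) = 2\lambda^2/[\,b_i\coth(b_i t/2) + (2\lambda + \mu_i)\,]$ and observe that $r^C_1(t) \le r^C_2(t)$ for every $t\ge 0$ is equivalent to
\[
b_1\coth\!\left(\tfrac{b_1 t}{2}\right) + \mu_1 \;\ge\; b_2\coth\!\left(\tfrac{b_2 t}{2}\right) + \mu_2 \quad \text{for all } t\ge 0.
\]
The forward implication would use that $\mu_1 \ge \mu_2$ forces $b_1 \ge b_2$, combined with the monotonicity in $x$ of $x \mapsto x\coth(xt/2)$ (a consequence of $\sinh u \ge u$). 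The converse would follow by sending $t \to \infty$, reducing the inequality to $b_1 + \mu_1 \ge b_2 + \mu_2$, and then using that $\mu \mapsto \sqrt{\mu^2+4\lambda\mu} + \mu$ is strictly increasing. The only mildly delicate point in this direct route is the converse, which the reduction to Proposition \ref{WSS_rep_fix} avoids altogether; for that reason the proof is almost surely best presented as a one-line corollary of Proposition \ref{WSS_rep_fix}.
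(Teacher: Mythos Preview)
Your proposal is correct and matches the paper's own treatment: the paper states Proposition~\ref{CSS_fix_func} explicitly ``as a consequence of Proposition~\ref{WSS_rep_fix}'' with no additional computation, which is precisely your primary argument via the specialization $\lambda_{i2}=0$. Your alternative self-contained sketch is sound as well, but the paper does not pursue it.
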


Notice that the {ordering $\tau^C_1 \ge_{lr} \tau^C_2$} does not hold under the assumptions of Proposition \ref{CSS_fix_func}, as a consequence of the analogous result for the warm standby system, obtained in Section \ref{subsecc::warm}.
The following result is related to the hazard rate order.
\begin{proposition}\label{prop8}
	If $\lambda_1 \leq \lambda_2$ and $\displaystyle \frac{\mu_1}{\mu_2} \geq \left(\frac{\lambda_1}{\lambda_2}\right)^2$, then $\tau_1^{C} \geq_{hr} \tau_2^{C}$
\end{proposition}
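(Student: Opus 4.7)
The plan is to prove directly that $r_1^C(t) \le r_2^C(t)$ for every $t \ge 0$, which is exactly $\tau_1^C \ge_{hr} \tau_2^C$. Using the closed-form expression for $r^C$ displayed above and cross-multiplying (both denominators are positive), this becomes
\[
\lambda_2^2\, b_1\coth(b_1 t/2) + \lambda_2^2(2\lambda_1+\mu_1) \;\ge\; \lambda_1^2\, b_2\coth(b_2 t/2) + \lambda_1^2(2\lambda_2+\mu_2).
\]
I would establish this by proving two stronger separate inequalities: (A) the ``constant'' part $\lambda_2^2(2\lambda_1+\mu_1) \ge \lambda_1^2(2\lambda_2+\mu_2)$, and (B) the ``hyperbolic'' part $\lambda_2^2\, b_1 \coth(b_1 t/2) \ge \lambda_1^2\, b_2 \coth(b_2 t/2)$ for every $t > 0$.

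Part (A) follows from the algebraic identity $\lambda_2^2(2\lambda_1+\mu_1) - \lambda_1^2(2\lambda_2+\mu_2) = 2\lambda_1\lambda_2(\lambda_2-\lambda_1) + (\lambda_2^2\mu_1 - \lambda_1^2\mu_2)$, in which both summands are nonnegative by the two hypotheses $\lambda_1 \le \lambda_2$ and $\mu_1\lambda_2^2 \ge \mu_2\lambda_1^2$.

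For part (B) I would first extract the auxiliary estimate $b_1/\lambda_1^2 \ge b_2/\lambda_2^2$; after squaring and using $b_i^2 = \mu_i^2 + 4\lambda_i \mu_i$, this reduces to the two comparisons $\mu_1^2/\lambda_1^4 \ge \mu_2^2/\lambda_2^4$ and $\mu_1/\lambda_1^3 \ge \mu_2/\lambda_2^3$, both of which are routine consequences of the hypotheses (the first by squaring $\mu_1/\lambda_1^2 \ge \mu_2/\lambda_2^2$, the second by further combining it with $\lambda_1 \le \lambda_2$). Then (B) rewrites as
\[
\frac{\lambda_2^2\, b_1}{\lambda_1^2\, b_2} \;\ge\; \frac{\coth(b_2 t/2)}{\coth(b_1 t/2)}, \qquad t > 0.
\]
Lemma \ref{LEMON} (applied with the larger of $b_1,b_2$ in the numerator) implies that the right-hand side is monotone in $t$; since $\coth(x) \sim 1/x$ as $x \to 0^+$ and $\coth(x) \to 1$ as $x \to \infty$, its extreme values are $b_1/b_2$ and $1$, so $\coth(b_2 t/2)/\coth(b_1 t/2) \le \max\{b_1/b_2,\, 1\}$ uniformly in $t$. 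The proof of (B) then closes by noting $\lambda_2^2 b_1/(\lambda_1^2 b_2) \ge b_1/b_2$ (from $\lambda_2 \ge \lambda_1$) and $\lambda_2^2 b_1/(\lambda_1^2 b_2) \ge 1$ (the auxiliary estimate).

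The main obstacle is claim (B): the hypotheses do not determine the sign of $b_1 - b_2$, so one cannot simply invoke monotonicity of $b \mapsto b\coth(bt/2)$ in $b$ at fixed $t$. The key observation that unblocks the argument is that the extreme values of the $\coth$-ratio are $\{b_1/b_2,\,1\}$ regardless of which of $b_1,b_2$ is larger, and the condition $\mu_1/\mu_2 \ge (\lambda_1/\lambda_2)^2$ combined with $\lambda_1 \le \lambda_2$ is tuned precisely to dominate $\max\{b_1/b_2,\,1\}$.
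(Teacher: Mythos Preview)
Your proof is correct and follows essentially the same route as the paper's: split the hazard-rate inequality into a constant part (your (A), the paper's (\ref{eq_prop8_1})) and a hyperbolic part (your (B)), then handle the latter by invoking Lemma~\ref{LEMON} to reduce the $\coth$-ratio comparison to its endpoint values and finally to the squared inequality $\lambda_1^4 b_2^2 \le \lambda_2^4 b_1^2$. The only cosmetic difference is that you factor the ratio as $\coth(b_2 t/2)/\coth(b_1 t/2)$ with extremes $\{b_1/b_2,1\}$, whereas the paper keeps the $b_i$ attached and writes $b_2\coth(b_2 t/2)/(b_1\coth(b_1 t/2))$ with extremes $\{1,b_2/b_1\}$; the resulting sufficient conditions are identical.
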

\begin{proof}

We must prove
\[\frac{2\lambda_1^2}{b_1\coth(\frac{b_1t}{2})+(2\lambda_1+\mu_1)} \geq \frac{2\lambda_2^2}{b_2\coth(\frac{b_2t}{2})+(2\lambda_2+\mu_2)},\]
which is equivalent to \( \lambda_1^2\left({b_2\coth\left(\frac{b_2t}{2}\right)+2\lambda_2+\mu_2}\right) \leq \lambda_2^2\left({b_1\coth\left(\frac{b_1t}{2}\right)+2\lambda_1+\mu_1}\right) \).
{From the assumptions}, it is easy to see that
\begin{equation}\label{eq_prop8_1}
	\lambda_1^2 (2\lambda_2+\mu_2) \leq \lambda_2^2 (2\lambda_1+\mu_1).
\end{equation}
Thus, it is sufficient to check the inequality
\(\lambda_1^2b_2 \coth\left(\frac{b_2t}{2}\right) \leq \lambda_2^2 b_1\coth\left(\frac{b_1t}{2}\right),\)
or equivalently
\[
	\frac{b_2 \coth\left(\frac{b_2t}{2}\right)}{b_1\coth\left(\frac{b_1t}{2}\right)} \leq \frac{\lambda_2^2}{\lambda_1^2}.
\]
By Lemma \ref{LEMON}, the function \( \displaystyle \frac{b_2 \coth\left(\frac{b_2t}{2}\right)}{b_1\coth\left(\frac{b_1t}{2}\right)}\) is monotone, and therefore we only need to verify that
\[
	\max\left\{ \lim\limits_{t \rightarrow 0}\frac{b_2 \coth\left(\frac{b_2t}{2}\right)}{b_1\coth\left(\frac{b_1t}{2}\right)}, \, \lim\limits_{t \rightarrow \infty}\frac{b_2 \coth\left(\frac{b_2t}{2}\right)}{b_1\coth\left(\frac{b_1t}{2}\right)} \right\} \leq \frac{\lambda_2^2}{\lambda_1^2}.
\]
Then, it is sufficient to prove the inequality $\lambda_1^2 b_2 \leq \lambda_2^2 b_1$. Now, taking squares in both sides of the last inequality, we obtain
\begin{equation}\label{eq_prop8_2}
	\lambda_1^4(\mu_2^2 + 4\lambda_2 \mu_2) \leq \lambda_2^4 (\mu_1^2 + 4\lambda_1 \mu_1).
\end{equation}
From $\lambda_1 \leq \lambda_2$ this last inequality holds, since $\displaystyle \frac{\mu_1}{\mu_2} \geq \left(\frac{\lambda_1}{\lambda_2}\right)^n$ for all $n \geq 2$.
\end{proof}

Note that if $\displaystyle \frac{\mu_1}{\mu_2} \geq \frac{\lambda_1}{\lambda_2}$, then {\(\displaystyle \frac{\mu_1}{\mu_2} \geq  \left(\frac{\lambda_1}{\lambda_2}\right)^2.\)} As a consequence $\tau_1^{C} \geq_{hr} \tau_2^{C}$ {holds} when the system $1$ has stochastically greater lifetimes of its units and stochastically smaller repair times of its units than the system $2$, i.e., $\lambda_1 \le \lambda_2$ and $\mu_1 \geq \mu_2$.

The condition $\displaystyle \frac{\mu_1}{\mu_2} \geq \left(\frac{\lambda_1}{\lambda_2}\right)^2$ is not necessary for the relation $\tau_1^{C} \ge_{hr} \tau_2^{C}$ to hold when $\lambda_1 \leq \lambda_2$. We show this in an example.

\begin{example}
	Let us consider $\displaystyle \frac{\mu_1}{\lambda_1^2} = 1 < 2 = \frac{\mu_2}{\lambda_2^2}$. From the proof of Proposition 6, for $\tau_1^{C} \ge_{hr} \tau_2^{C}$ to hold, it is sufficient to check inequalities (\ref{eq_prop8_1}) and (\ref{eq_prop8_2}) which are equivalent to $4/\lambda_1 - 8/\lambda_2 \geq 3$ and $1/\lambda_1 - 1/\lambda_2 \geq 1/2$, respectively. Since $\lambda_i$ is positive, for $i=1,2$, it is sufficient to check that $4/\lambda_1 - 8/\lambda_2 \geq 3$. Thus $\tau_1^{C} \ge_{hr} \tau_2^{C}$ when $\lambda_1 \leq \lambda_2$ and $4/\lambda_1 - 8/\lambda_2 \geq 3$.
\end{example}

Now we will find order relations between the lifetimes of {Markovian} warm and cold standby systems when at the initial instant there is a unit under repair and the other unit is working as principal. Let us denote the lifetimes of these systems as $\tau^{W*}$ and $\tau^{C*}$, respectively.
{
Let $\Phi^{W*}(t)$ $\left(\Phi^{C*}(t)\right)$, $\phi^{W*}(t)$ $\left(\phi^{C*}(t)\right)$ and $r^{W*}(t)$ $\left(r^{C*}(t)\right)$ be the survival, the probability density and the hazard rate functions of $\tau^{W*}$ $\left(\tau^{C*}\right)$, respectively. {The following expressions for these functions are derived using a similar reasoning to the one we used to get (\ref{distr_WSS}), (\ref{dens_WSS}) and (\ref{hazard_WSS}).}}
\begin{eqnarray*}
	\Phi^{W^*}(t) &=& \exp\left\{ -\frac{2 \lambda_1 + \lambda_2 + \mu}{2} t \right\} \left[ \cosh \left( \frac{a}{2}t \right) + \frac{\lambda_2 + \mu}{a}\sinh \left( \frac{a}{2}t \right) \right],\\
	\phi^{W^*}(t) &=& \exp\left\{ -\frac{2 \lambda_1 + \lambda_2 + \mu}{2} t \right\} \lambda_1 \left[ \cosh \left( \frac{a}{2}t \right) + \frac{\lambda_2 - \mu}{a}\sinh \left( \frac{a}{2}t \right) \right],\\
	r^{W^*}(t) &=& \lambda_1 \left[ \frac{a+(\lambda_2-\mu)\tanh\left(\frac{a}{2}t\right)}{a+(\lambda_2 + \mu)\tanh\left(\frac{a}{2}t\right)} \right].\\
\end{eqnarray*}
where $a=\sqrt{(\lambda_2+\mu)^2+4\lambda_1\mu}$. Now, taking $\lambda_2 = 0$ we have,
\begin{eqnarray*}
	\Phi^{C^*}(t) &=& \exp\left\{ -\frac{2 \lambda + \mu}{2} t \right\} \left[ \cosh \left( \frac{b}{2}t \right) + \frac{\mu}{b}\sinh \left( \frac{b}{2}t \right) \right],\\
	\phi^{C^*}(t) &=& \exp\left\{ -\frac{2 \lambda + \mu}{2} t \right\} \lambda \left[ \cosh \left( \frac{b}{2}t \right) - \frac{\mu}{b}\sinh \left( \frac{b}{2}t \right) \right],\\
	r^{C^*}(t) &=& \lambda \left[ \frac{b-\mu\tanh\left(\frac{b}{2}t\right)}{b+
			\mu\tanh\left(\frac{b}{2}t\right)} \right],
\end{eqnarray*}
where $b=\sqrt{\mu^2+4\lambda\mu}$.

Let $\tau^{W*}_i$ be the lifetime of a {warm standby system} when at the initial instant there is a unit under repair and the other is working as principal, with expected lifetime $1/\lambda_{i1}$ for the principal unit and  $1/\lambda_{i2}$ for the {standby} unit, and with expected repair time $1/\mu_i$, for $i=1,2$, respectively.
In a similar way to Propositions \ref{WSS_general} and \ref{prop8} we establish a hazard rate ordering between $\tau^{W*}_1$ and $\tau^{W*}_2$, and also between  $\tau^{C*}_1$ and $\tau^{C*}_2$.

\begin{proposition}\label{WSS^*_hr_prop}
	If $\lambda_{11} \leq \lambda_{21}$, $\mu_{1} \geq \mu_{2}$ and
	\begin{equation}\label{WSS^*_hyp}
	\frac{\lambda_{12}}{\lambda_{22}} \leq \frac{\mu_{1}}{\mu_{2}},
	\end{equation}
	 then $\tau^{W^*}_1 \geq_{hr}\tau^{W^*}_2$.
\end{proposition}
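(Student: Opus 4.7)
The plan is to prove the pointwise hazard rate inequality $r^{W^*}_1(t) \le r^{W^*}_2(t)$ for every $t \ge 0$, which is equivalent to $\tau^{W^*}_1 \ge_{hr} \tau^{W^*}_2$. Starting from the explicit formula for $r^{W^*}_i$, I would divide numerator and denominator by $\tanh(a_i t/2)$ to obtain
\[
r^{W^*}_i(t) = \lambda_{i1}\,\frac{D_i(t) - \mu_i}{D_i(t) + \mu_i}, \qquad D_i(t) = a_i\coth(a_i t/2) + \lambda_{i2}.
\]
Multiplying out $\lambda_{11}(D_1-\mu_1)(D_2+\mu_2) \le \lambda_{21}(D_2-\mu_2)(D_1+\mu_1)$ and collecting symmetric terms, the inequality reduces to
\[
(\lambda_{21}-\lambda_{11})(D_1 D_2 - \mu_1\mu_2) + (\lambda_{21}+\lambda_{11})(\mu_1 D_2 - \mu_2 D_1) \ge 0.
\]

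I would then argue that both summands are non-negative. From $a_i^2 \ge (\lambda_{i2}+\mu_i)^2$ and $\coth \ge 1$ we get $D_i \ge a_i + \lambda_{i2} \ge \mu_i + 2\lambda_{i2} \ge \mu_i$, whence $D_1 D_2 \ge \mu_1 \mu_2$; together with $\lambda_{11} \le \lambda_{21}$ this settles the first summand. For the second summand I would split
\[
\mu_1 D_2 - \mu_2 D_1 = (\mu_1\lambda_{22} - \mu_2\lambda_{12}) + \bigl(\mu_1 a_2 \coth(a_2 t/2) - \mu_2 a_1 \coth(a_1 t/2)\bigr),
\]
where the first bracket is non-negative directly by hypothesis (\ref{WSS^*_hyp}).

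The main obstacle is the remaining bracket, which is equivalent to showing
\[
\frac{a_1\coth(a_1 t/2)}{a_2\coth(a_2 t/2)} \le \frac{\mu_1}{\mu_2} \quad \text{for all } t \ge 0.
\]
By Lemma \ref{LEMON}, the ratio $\coth(a_1 t/2)/\coth(a_2 t/2)$ is monotone in $t$, so the left-hand side is monotone with limits $1$ as $t \to 0$ and $a_1/a_2$ as $t \to \infty$; its supremum on $[0,\infty)$ is therefore $\max\{1,\,a_1/a_2\}$. Since $\mu_1 \ge \mu_2$ we have $1 \le \mu_1/\mu_2$, so it suffices to prove $a_1/a_2 \le \mu_1/\mu_2$, equivalently $\mu_2^2 a_1^2 \le \mu_1^2 a_2^2$. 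Expanding $a_i^2 = (\lambda_{i2}+\mu_i)^2 + 4\lambda_{i1}\mu_i$, this splits termwise into $\mu_2(\lambda_{12}+\mu_1) \le \mu_1(\lambda_{22}+\mu_2)$ (which follows from $\mu_2\lambda_{12} \le \mu_1\lambda_{22}$) and $\lambda_{11}\mu_2 \le \lambda_{21}\mu_1$ (which follows from $\mu_2 \le \mu_1$ and $\lambda_{11} \le \lambda_{21}$), closing the argument.
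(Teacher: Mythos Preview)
Your proof is correct and follows essentially the same route as the paper's. After rewriting $r^{W^*}_i$ with $\coth$ instead of $\tanh$, your two summands correspond exactly to the paper's inequalities (\ref{WSS^*_hr_1}) and (\ref{WSS^*_hr_2}): the condition $\mu_1\lambda_{22}\ge\mu_2\lambda_{12}$ is precisely (\ref{WSS^*_hyp}), and your $\coth$ inequality is the same as the paper's $\tanh$ inequality, both handled via Lemma~\ref{LEMON} and the verification $\mu_2 a_1\le \mu_1 a_2$. The only cosmetic difference is that the paper uses $\lambda_{11}\le\lambda_{21}$ upfront to reduce to a comparison of the bracketed fractions, whereas you carry the $\lambda_{i1}$ factors through and pick up the extra (easy) term $(\lambda_{21}-\lambda_{11})(D_1D_2-\mu_1\mu_2)\ge 0$.
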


\begin{proof}

Let us consider the inequality,
\begin{eqnarray}
	\lambda_{11} \left[ \frac{a_1+(\lambda_{12}-\mu_1)\tanh\left(\frac{a_1}{2}t\right)}{a_1+(\lambda_{12}+
	\mu_1)\tanh\left(\frac{a_1}{2}t\right)} \right]
	 \leq \lambda_{21} \left[ \frac{a_2+(\lambda_{22}-\mu_2)\tanh\left(\frac{a_2}{2}t\right)}{a_2+(\lambda_{22}+
	\mu_2)\tanh\left(\frac{a_2}{2}t\right)} \right].\label{nenita1}
\end{eqnarray}

As $\lambda_{11} \le \lambda_{21}$, to prove (\ref{nenita1}) it is sufficient to prove the inequality
\begin{equation*}\label{WSS^*_hr}
\frac{a_1+(\lambda_{12}-\mu_1)\tanh\left(\frac{a_1}{2}t\right)}{a_1+(\lambda_{12}+
\mu_1)\tanh\left(\frac{a_1}{2}t\right)} \leq \frac{a_2+(\lambda_{22}-\mu_2)\tanh\left(\frac{a_2}{2}t\right)}{a_2+(\lambda_{22}+
\mu_2)\tanh\left(\frac{a_2}{2}t\right)}.
\end{equation*}
or equivalently,
\[
\left[a_1+(\lambda_{12}-\mu_1)\tanh\left(\frac{a_1}{2}t\right)\right] \left[a_2+(\lambda_{22}+
\mu_2)\tanh\left(\frac{a_2}{2}t\right)\right] \leq\] \[ \left[a_1+(\lambda_{12}+
\mu_1)\tanh\left(\frac{a_1}{2}t\right)\right] \left[a_2+(\lambda_{22}-\mu_2)\tanh\left(\frac{a_2}{2}t\right)\right].
\]

Now, to prove the last inequality it is sufficient to check the following ones
\begin{eqnarray}
	(\lambda_{12}-\mu_1)(\lambda_{22}+\mu_2) &\leq& (\lambda_{12}+\mu_1)(\lambda_{22}-\mu_2),\label{WSS^*_hr_1}\\
	a_1\mu_2\tanh\left(\frac{a_2}{2}t\right)  &\leq& a_2\mu_1 \tanh\left(\frac{a_1}{2}t\right).\label{WSS^*_hr_2}
\end{eqnarray}

After some transformations we obtain that (\ref{WSS^*_hr_1}) is equivalent to (\ref{WSS^*_hyp}).
Besides, (\ref{WSS^*_hr_2}) can be written as
\[h(t) = \frac{\tanh\left(\frac{a_2}{2}t\right)}{\tanh\left(\frac{a_1}{2}t\right)} \leq \frac{a_2 \mu_1}{a_1 \mu_2}.\]

Using Lemma \ref{LEMON} we can see that $h(t)$ is monotone, so (\ref{WSS^*_hr_2}) is equivalent to
\[\max\left\{ \lim\limits_{t \rightarrow 0}h(t),\lim\limits_{t \rightarrow \infty}h(t) \right\} = \max\left\{ \frac{a_2}{a_1}, 1 \right\} \leq \frac{a_2 \mu_1}{a_1 \mu_2}.\]

Due to $\mu_1 \geq \mu_2$ we only need to prove the inequality $a_1 \mu_2 \leq a_2 \mu_1$. For this it is sufficient to verify that
 \begin{eqnarray*}
 	[(\lambda_{12}+\mu_1)\mu_2 + (\lambda_{22}+\mu_2)\mu_1] [(\lambda_{12}+\mu_1)\mu_2 - (\lambda_{22}+\mu_2)\mu_1] &\le& 0,\\
 	4\mu_1\mu_2(\lambda_{11}\mu_2 - \lambda_{21}\mu_1) &\leq& 0.
 \end{eqnarray*}
But these inequalities follow from (\ref{WSS^*_hyp}) and $\lambda_{11}\mu_2 \leq \lambda_{21}\mu_1$.
\end{proof}

From (\ref{nenita1}), taking $t = 0$, it is easy to see that $\lambda_{11} \leq \lambda_{21}$ is a necessary condition for $\tau^{W^*}_1 \geq_{hr}\tau^{W^*}_2$ to hold.

As a particular case of Proposition \ref{WSS^*_hr_prop}, {the ordering $\tau^{W^*}_1 \geq_{hr}\tau^{W^*}_2$ holds} when $\lambda_{11} \leq \lambda_{21}$, $\lambda_{12} \leq \lambda_{22}$ and $\mu_{1} \geq \mu_{2}$.

Let us consider two {Markovian} {cold standby systems} when at the initial instant there is a unit under repair and the other unit is working as principal, with expected lifetime $1/\lambda_{i}$ for the principal unit and expected repair time $1/\mu_i$, for $i=1,2$, respectively.
Taking $\lambda_{12} = \lambda_{22} = 0$ in Proposition \ref{WSS^*_hr_prop} we obtain $\tau^{C^*}_1 \geq_{hr}\tau^{C^*}_2$ when $\lambda_{1} \leq \lambda_{2}$ and $\mu_{1} \geq \mu_{2}$.


\subsection{Aging classes}\label{subs::ageing_classes}

{
Let $X$ be a nonnegative random variable  and $t \ge 0$ a real number. The residual lifetime of $X$, denoted by $X_t$, is defined as
$X_t = (X-t| X>t)$.
A random variable $X$ with probability density function $f(x)$ is said to belong to the ageing class \textit{Increasing Likelihood Ratio} ($ILR$) if $f(x+t)/f(x)$ decreases in $x \geq 0$, for all $t \geq 0$. This condition is equivalent to $X_s \ge_{lr} X_t$ for $0\le s \le t$. It is well know that this ageing class is contained in other important ageing classes as \textit{Increasing Failure Rate} ($IFR$) and \textit{New Better than Used} ($NBU$).
The random variable $X$ is said to belong to the ageing class \textit{Decreasing Likelihood Ratio} ($DLR$) if $f(x+t)/f(x)$ increases in $x \geq 0$, for all $t \geq 0$. Also, if $X \in DLR$ then $X$ belongs to \textit{Decreasing Failure Rate} ($DFR$) and \textit{New Worst than Used} ($NWU$) ageing classes. For more details about the ageing classes see \cite{barlow_r._e._statistical_1981} and \cite{tadashi_dohi_orders_2002}.

Consider a warm standby system and a cold standby system. Suppose the lifetime of the principal unit and the repair time of the units of these systems are exponentially distributed with hazard rates $\lambda_1$ and $\mu$, respectively. Also, the lifetime of the standby unit of the warm standby system is an exponential random variable with hazard rate $\lambda_2$. The following result is related to the ageing classes the lifetimes of these systems are in.

\begin{proposition}\label{classes}
For all $\lambda_1$, $\lambda_2$ and $\mu$, $\tau^{W}, \tau^C \in ILR$ and $\tau^{W*}, \tau^{C*} \in DLR$.
\end{proposition}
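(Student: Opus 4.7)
The plan is to use the standard characterisations $X\in ILR$ iff its density is log-concave and $X\in DLR$ iff its density is log-convex, and then verify the appropriate one-dimensional convexity property directly on each of the four densities $\phi^W,\phi^C,\phi^{W*},\phi^{C*}$ that are written out explicitly in Sections \ref{subsecc::warm} and \ref{subsec::cold_markovian}.

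For $\phi^W$, expression (\ref{dens_WSS}) gives, up to an additive constant,
\[
\log\phi^W(t)=-\tfrac{2\lambda_1+\lambda_2+\mu}{2}\,t+\log\sinh\!\left(\tfrac{at}{2}\right).
\]
The linear piece contributes nothing to the second derivative, while
\[
\frac{d^2}{dt^2}\log\sinh\!\left(\tfrac{at}{2}\right)=-\frac{a^2}{4\sinh^2(at/2)}\le 0,
\]
so $(\log\phi^W)''\le 0$ on $(0,\infty)$, i.e. $\phi^W$ is log-concave and $\tau^W\in ILR$. The argument for $\phi^C$ is identical with $b$ in place of $a$.

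For $\phi^{W*}$ the key step is the rewriting $\cosh(x)+A\sinh(x)=\tfrac{1}{2}(1+A)e^{x}+\tfrac{1}{2}(1-A)e^{-x}$ applied with $A=(\lambda_2-\mu)/a$. Using
\[
a^2=(\lambda_2+\mu)^2+4\lambda_1\mu\ge(\lambda_2+\mu)^2\ge(\lambda_2-\mu)^2,
\]
one has $|A|\le 1$, so both coefficients $\tfrac{1\pm A}{2}$ are nonnegative. Substituting into the formula for $\phi^{W*}$ yields
\[
\phi^{W*}(t)=c_1 e^{-\alpha_1 t}+c_2 e^{-\alpha_2 t},
\]
with $c_1,c_2\ge 0$ and $\alpha_{1,2}=\tfrac{1}{2}(2\lambda_1+\lambda_2+\mu\mp a)$. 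A direct calculation then gives the Cauchy--Schwarz--type identity
\[
\phi^{W*}(t)\,(\phi^{W*})''(t)-\bigl((\phi^{W*})'(t)\bigr)^{2}=c_1c_2(\alpha_1-\alpha_2)^2 e^{-(\alpha_1+\alpha_2)t}\ge 0,
\]
so $(\log\phi^{W*})''\ge 0$, proving log-convexity and hence $\tau^{W*}\in DLR$. For $\phi^{C*}$ one sets $\lambda_2=0$; the same decomposition applies with $A=-\mu/b$, and $|A|\le 1$ now follows from $b^2=\mu^2+4\lambda\mu\ge\mu^2$.

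The only nontrivial point is the verification of the inequalities $a\ge|\lambda_2-\mu|$ and $b\ge\mu$ that guarantee nonnegativity of the coefficients in the two-term exponential representations of $\phi^{W*}$ and $\phi^{C*}$; both reduce to the definition of $a$ and $b$. Once this is in place, the $ILR$ conclusion for $\phi^W,\phi^C$ is an immediate second-derivative check, and the $DLR$ conclusion for $\phi^{W*},\phi^{C*}$ is the one-line identity above.
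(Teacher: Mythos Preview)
Your proof is correct. Both you and the paper ultimately hinge on the same inequality $a^{2}\ge(\lambda_{2}-\mu)^{2}$ (equivalently $4\mu(\lambda_{1}+\lambda_{2})\ge 0$), but you reach the conclusions by a genuinely different route. The paper works directly with the defining ratio $\phi(x+t)/\phi(x)$: for $\tau^{W}$ it expands $\sinh\bigl(\tfrac{a(t+x)}{2}\bigr)$ via the addition formula to get $\sinh(\tfrac{at}{2})\coth(\tfrac{ax}{2})+\cosh(\tfrac{at}{2})$, decreasing in $x$; for $\tau^{W*}$ it reduces monotonicity of the ratio to monotonicity of the M\"obius-type map $u\mapsto (au+\lambda_{2}-\mu)/(a+(\lambda_{2}-\mu)u)$ in $u=\tanh(\tfrac{ax}{2})$, which is where the inequality $a^{2}\ge(\lambda_{2}-\mu)^{2}$ enters. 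You instead invoke the standard equivalence $ILR\Leftrightarrow$ log-concave density and $DLR\Leftrightarrow$ log-convex density, which makes the $\tau^{W}$ case a one-line second-derivative check, and for $\tau^{W*}$ you exploit the structural fact that the density is a two-term exponential sum with nonnegative weights, from which log-convexity follows by the displayed Cauchy--Schwarz identity. Your argument is shorter and more conceptual (any nonnegative sum of exponentials is log-convex), while the paper's approach stays closer to the definition and avoids appealing to the log-concavity characterisation. Both are valid; yours would generalise more readily to densities that are finite nonnegative combinations of exponentials.
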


\begin{proof}
First we will prove $\tau^{W} \in ILR$. Consider the ratio,
\[\frac{\phi^{W}(x+t)}{\phi^{W}(x)} = \exp\left\{-\frac{(2\lambda_1+\lambda_2+\mu)t}{2}\right\} \frac{\sinh \left( \frac{a(t+x)}{2} \right)}{\sinh \left( \frac{ax}{2} \right)}.\]

This last expression is decreasing in $x$ if and only if the function
\( \displaystyle g(x) = \frac{\sinh \left( \frac{a(t+x)}{2} \right)}{\sin \left( \frac{ax}{2} \right)},\)
decreases in $x$. Note that
\begin{eqnarray*}
	g(x) &=& \frac{ \sinh \left( \frac{at}{2} \right) \cosh \left( \frac{ax}{2} \right) +  \cosh \left( \frac{at}{2} \right) \sinh \left( \frac{ax}{2} \right)}{\sinh \left( \frac{ax}{2} \right)}\\
		&=&  \sinh \left( \frac{at}{2} \right) \coth \left( \frac{ax}{2} \right) + \cosh \left( \frac{at}{2} \right).
\end{eqnarray*}
Hence, $g(x)$ is a decreasing function in $x$.

Analogously to the previous proof, in order to prove $\tau^{W*} \in DLR$, it is sufficient to check that
\[g(x) = \frac{ \cosh \left( \frac{a}{2}(t+x) \right) + \frac{\lambda_2 - \mu}{a}\sinh \left( \frac{a}{2}(t+x) \right) }{\cosh \left( \frac{a}{2}x \right) + \frac{\lambda_2 - \mu}{a}\sinh \left( \frac{a}{2}x \right)},\]
is increasing in $x$.

After some transformations we get
\[g(x) = \cosh \left( \frac{a}{2}t \right) + \sinh \left( \frac{a}{2}t \right) \left[ \frac{a \tanh \left( \frac{a}{2}x \right) + \lambda_2 - \mu}{a  + (\lambda_2 - \mu) \tanh \left( \frac{a}{2}x \right)} \right].\]

Now, as $ \tanh \left( \frac{a}{2}x \right)$ is increasing in $x$, it is sufficient to prove that the function,
\[h(u) = \frac{a u + \lambda_2 - \mu}{a  + (\lambda_2 - \mu) u},\]
is also increasing in $u$. Deriving $h(u)$, we see that $h'(u) \geq 0$ is equivalent to
\[a^2 = (\lambda_2+\mu)^2+4\lambda_1\mu  \geq (\lambda_2 - \mu)^2,\]
and this inequality is equivalent to
\(4\mu(\lambda_1 + \lambda_2) \geq 0. \)

Taking $\lambda_2 = 0$ we obtain the analogous result for the {cold standby system} model.
\end{proof}

As a consequence of Proposition \ref{classes}, when $t$ increases the residual lifetimes $\tau^W_t = (\tau^W - t| \tau^W > t)$ and $\tau^C_t = (\tau^C - t| \tau^C > t)$ decrease in the sense of the likelihood ratio order and the residual lifetimes of  $\tau^{W*}$ and $\tau^{C*}$ increase in the same sense.

Finally, note that taking $\lambda_{12} =0$ in Corollary \ref{WSS_fun_fix} we obtain $\tau^C \ge_{lr} \tau^W$, for all $\lambda_1, \lambda_2$ and $\mu$.


\section{Cold standby systems. General distributions}\label{2unitGral}

Having discussed the Markovian cold standby system, the final section of this paper addresses the cold standby system from a broader perspective. We still assume that the system is composed of two units: $C_1$ and $C_2$. However, we now suppose that for $i=1,2$,\, $C_i$ has lifetime $X_i$ with a general distribution function $F_i(t)$ and density function $f_i(t)$, and repair time $Y_i$ with distribution function $G_i(t)$.
When $C_1$ fails, and $C_2$ is available, $C_1$ is immediately sent to reparation and $C_2$ takes its place. Likewise, when $C_2$ fails, and $C_1$ is available, $C_2$ is immediately sent to repair unit and $C_1$ takes its place. This process continues until one unit fails, while the other unit is being repaired.

{These systems, in general, are non-Markovian and their study using stochastic orders could be useful to decide which design is more useful or valuable in real-life two-units cold standby systems.}

Let us define the following random variables,
\begin{itemize}
	\item[$\tau_0^C$] lifetime of the system when at the initial instant $C_1$ starts to work and $C_2$ is waiting,
	\item[$\tau_1^C$] lifetime of the system when at the initial instant $C_1$ starts to be repaired and $C_2$ starts to work,
	\item[$\tau_2^C$] lifetime of the system when at the initial instant $C_2$ starts to work and $C_1$ starts its reparation.
	\item[$\tau_3^C$] lifetime of the system when at the initial instant $C_2$ starts to work and $C_1$ is waiting.
\end{itemize}

The following system of integral equations holds,
\begin{eqnarray*}
	\Phi_0^C(t) &=& P(X_1 > t) + \int_0^t \Phi_1^C(t - x)dF_1(x),\\
	\Phi_1^C(t) &=& P(X_2 > t) + \int_0^t G_1(x) \Phi_2^C(t - x)dF_2(x),\\
	\Phi_2^C(t) &=& P(X_1 > t) + \int_0^t G_2(x) \Phi_{{1}}^C(t - x)dF_1(x),\\
    \Phi_3^C(t) &=& P(X_2 > t) + \int_0^t \Phi_2^C(t - x)dF_2(x),
\end{eqnarray*}
where $\Phi^C_i(t)$ is the survival function of $\tau^C_i$, for $i=0,1,2,3$.

Applying the Laplace transformation to the previous system we get,
\begin{equation}
  \begin{array}{rcl}\label{sistem_int}
	\widehat{\Phi}_0^C(s) &=& \widehat{\overline{F}}_1(s) + \widehat{\Phi}_1^C(s) \left( 1 - s \widehat{\overline{F}}_1(s) \right), \\
	\widehat{\Phi}_1^C(s) &=& \widehat{\overline{F}}_2(s) +  \widehat{G_1f_2}(s) \widehat{\Phi}_2^C(s), \\
	\widehat{\Phi}_2^C(s) &=& \widehat{\overline{F}}_1(s) + \widehat{G_2f_1}(s) \widehat{\Phi}_1^C(s),  \\
    \widehat{\Phi}_3^C(s) &=& \widehat{\overline{F}}_2(s) + \widehat{\Phi}_2^C(s) \left( 1 - s \widehat{\overline{F}}_1(s)\right).
\end{array}
\end{equation}

{
Consider the following allocation problem: we want to decide which unit should start to work at the initial instant whereas the other is waiting in standby. Notice that this problem is equivalent to compare $\tau_0^C$ and $\tau_3^C$.

\begin{proposition}
  Suppose that $X_1 =_{st} X_2 =_{st} X$. If $Y_1 \le_{st} Y_2$, or $Y_1 \le_{icv}Y_2$ and $f(t)$, the density function of $X$, is decreasing, then $\tau_0^C \ge_{lt} \tau_3^C$.
\end{proposition}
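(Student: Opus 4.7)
The plan is to work entirely with the Laplace transforms in the system (\ref{sistem_int}), exploiting the fact that $X_1=_{st}X_2=_{st}X$ collapses $\widehat{\overline{F}}_1=\widehat{\overline{F}}_2=\widehat{\overline{F}}$. Subtracting the fourth equation from the first under this assumption gives
\[
\widehat{\Phi}_0^C(s)-\widehat{\Phi}_3^C(s)=\bigl(\widehat{\Phi}_1^C(s)-\widehat{\Phi}_2^C(s)\bigr)\bigl(1-s\widehat{\overline{F}}(s)\bigr)=\bigl(\widehat{\Phi}_1^C(s)-\widehat{\Phi}_2^C(s)\bigr)\widehat{f}(s),
\]
so the proof reduces to showing $\widehat{\Phi}_1^C(s)\ge\widehat{\Phi}_2^C(s)$ for every $s\ge 0$.

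Next I would solve the linear $2\times 2$ system formed by the second and third equations of (\ref{sistem_int}). Substituting one into the other, and writing $\alpha_i(s)=\widehat{G_i f}(s)$, I obtain
\[
\widehat{\Phi}_1^C(s)=\frac{\widehat{\overline{F}}(s)\,[1+\alpha_1(s)]}{1-\alpha_1(s)\alpha_2(s)},\qquad \widehat{\Phi}_2^C(s)=\frac{\widehat{\overline{F}}(s)\,[1+\alpha_2(s)]}{1-\alpha_1(s)\alpha_2(s)},
\]
so that
\[
\widehat{\Phi}_1^C(s)-\widehat{\Phi}_2^C(s)=\frac{\widehat{\overline{F}}(s)\,[\alpha_1(s)-\alpha_2(s)]}{1-\alpha_1(s)\alpha_2(s)}.
\]
Since $\alpha_i(s)=\int_0^\infty e^{-st}G_i(t)f(t)\,dt\le\int_0^\infty f(t)\,dt=1$ and the inequality is strict for $s>0$, the denominator is positive; the numerator is nonnegative exactly when $\alpha_1(s)\ge\alpha_2(s)$, i.e.
\[
\int_0^\infty e^{-st}f(t)\,\bigl[G_1(t)-G_2(t)\bigr]\,dt\ge 0\qquad\text{for every }s\ge 0. \tag{$\star$}
\]

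Finally I would verify ($\star$) in the two cases. If $Y_1\le_{st}Y_2$, then $G_1(t)\ge G_2(t)$ pointwise and ($\star$) is immediate because the integrand is nonnegative. The delicate case is $Y_1\le_{icv}Y_2$ with $f$ decreasing: then $H(t):=\int_0^t[G_1(x)-G_2(x)]\,dx\ge 0$ for every $t\ge 0$, and the function $\psi(t):=e^{-st}f(t)$ is nonnegative and nonincreasing. Integrating ($\star$) by parts (Lebesgue--Stieltjes, to avoid assuming smoothness of $f$) gives
\[
\int_0^\infty\psi(t)\,dH(t)=\bigl[\psi(t)H(t)\bigr]_0^\infty-\int_0^\infty H(t)\,d\psi(t).
\]
The main obstacle is justifying that the boundary term vanishes: at $0$ this is clear since $H(0)=0$, while at infinity one uses that $H(t)\le t$ together with $t\,e^{-st}f(t)\to 0$ as $t\to\infty$ (for $s>0$ by exponential decay, and for $s=0$ because a decreasing integrable density must satisfy $tf(t)\to 0$). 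The remaining integral is $-\int_0^\infty H(t)\,d\psi(t)\ge 0$ because $H\ge 0$ and $-d\psi$ is a nonnegative measure by the monotonicity of $\psi$. This establishes ($\star$), hence $\widehat{\Phi}_1^C\ge\widehat{\Phi}_2^C$, and consequently $\tau_0^C\ge_{lt}\tau_3^C$.
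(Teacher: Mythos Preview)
Your argument is correct and follows essentially the same route as the paper: both reduce $\widehat{\Phi}_0^C(s)\ge\widehat{\Phi}_3^C(s)$ to the inequality $\widehat{G_1f}(s)\ge\widehat{G_2f}(s)$ via the explicit solution of (\ref{sistem_int}), then verify that inequality in the two cases. The only difference is cosmetic: for the $\le_{icv}$ case the paper invokes Lemma~7.1(b) of \cite{barlow_r._e._statistical_1981}, whereas you reprove that lemma inline by the integration-by-parts argument with $H(t)=\int_0^t[G_1-G_2]$ and $\psi(t)=e^{-st}f(t)$.
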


\begin{proof}
	Let us denote by $F(t)$ the distribution function of $X$.
	From (\ref{sistem_int}) we get
	$$	\widehat{\Phi}_0^C(s) = \widehat{\overline{F}}(s) + \frac{\widehat{\overline{F}}(s)\left[1+\widehat{G_1f}(s)\right] }{1-\widehat{G_1f}(s)\widehat{G_2f}(s)}$$
	and
	$$	\widehat{\Phi}_3^C(s) = \widehat{\overline{F}}(s) + \frac{\widehat{\overline{F}}(s)\left[1+\widehat{G_2f}(s)\right] }{1-\widehat{G_2f}(s)\widehat{G_1f}(s)}. $$
	
    The ordering $\tau_0^C \ge_{lt} \tau_3^C$ is equivalent to $\widehat{G_1f}(s) \ge \widehat{G_2f}(s)$ and this inequality can be written as
    \begin{equation}\label{ineq}
      \int_0^\infty (\bar G_2(x)-\bar G_1(x)) e^{-sx} f(x) dx\ge 0,
    \end{equation}
    which is true when $Y_1 \le_{st} Y_2$. Now, if $Y_1 \le_{icv}Y_2$ and $f(t)$ is decreasing, the inequality (\ref{ineq}) is proved using part (b) of Lemma 7.1, p. 120 of \cite{barlow_r._e._statistical_1981}.
\end{proof}

Taking $s = 0$ in (\ref{sistem_int}), it is obtained,
\begin{equation}
  \begin{array}{rcl}\label{sistemSEL1}
	\mathbb{E}\left[\tau_0^C\right] &=& \mathbb{E}\left[X_1\right] + \mathbb{E}\left[\tau_1^C\right],  \\
	\mathbb{E}\left[\tau_1^C\right] &=& \mathbb{E}\left[X_2\right] + \mathbb{E}\left[\tau_2^C\right] P[X_2 > Y_1], \\
	\mathbb{E}\left[\tau_2^C\right] &=& \mathbb{E}\left[X_1\right] + \mathbb{E}\left[\tau_1^C\right] P[X_1 > Y_2],  \\
	\mathbb{E}\left[\tau_3^C\right] &=& \mathbb{E}\left[X_2\right] + \mathbb{E}\left[\tau_2^C\right].
\end{array}
\end{equation}

From (\ref{sistemSEL1}) we have
\[ \mathbb{E}[\tau_1^C] = \frac{\mathbb{E}\left[X_2\right]  + P[X_2 > Y_1] \mathbb{E}\left[X_1\right] }{1 - P[X_1 > Y_2] P[X_2 > Y_1]}.\]
Thus
\begin{equation}\label{analytic_expr}
   \mathbb{E}[\tau_0^C] = \mathbb{E}\left[X_1\right] + \frac{\mathbb{E}\left[X_2\right]  + P[X_2 > Y_1] \mathbb{E}\left[X_1\right] }{1 - P[X_1 > Y_2] P[X_2 > Y_1]}.
\end{equation}

Suppose now we want to analyze when $\mathbb{E}\left[\tau_0^C\right] \ge \mathbb{E}\left[\tau_3^C\right]$. Using (\ref{analytic_expr}), and the corresponding formula for $\mathbb{E}[\tau^C_3]$, this inequality can be written
\begin{eqnarray*}
	& & \mathbb{E}[X_1](1-P(X_2>Y_1)P(X_1>Y_2))+ \mathbb{E}[X_2] +  \mathbb{E}[X_1]P(X_2>Y_1)\\
	&\ge&  \mathbb{E}[X_2](1-P(X_1>Y_2)P(X_2>Y_1))+ \mathbb{E}[X_1] +  \mathbb{E}[X_2]P(X_1>Y_2),
\end{eqnarray*}
which is reduced to
\begin{equation}\label{veValdes}
\mathbb{E}[X_1]P(X_2>Y_1)(1-P(X_1>Y_2))\ge  \mathbb{E}[X_2]P(X_1>Y_2)(1-P(X_2>Y_1)).
\end{equation}

As a consequence of (\ref{veValdes}) we get
\begin{proposition}\label{allocation_gral}
	If $ \mathbb{E}[X_{1}] \ge  \mathbb{E}[X_{2}]$ and $P[X_{2} > Y_{1}] \ge P[X_{1} > Y_{2}]$, then $\mathbb{E}\left[\tau_0^C\right] \ge \mathbb{E}\left[\tau_3^C\right]$.
\end{proposition}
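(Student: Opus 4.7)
The plan is to invoke the reduction already carried out in the main text: equation (\ref{veValdes}) shows that $\mathbb{E}[\tau_0^C]\ge \mathbb{E}[\tau_3^C]$ is equivalent to
\[
\mathbb{E}[X_1]\,P(X_2>Y_1)\,(1-P(X_1>Y_2))\ \ge\ \mathbb{E}[X_2]\,P(X_1>Y_2)\,(1-P(X_2>Y_1)),
\]
so it suffices to verify this single scalar inequality under the two hypotheses.

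To make the structure transparent, I would introduce the shorthand $a=\mathbb{E}[X_1]$, $b=\mathbb{E}[X_2]$, $p=P(X_2>Y_1)$ and $q=P(X_1>Y_2)$. The hypotheses then read $a\ge b\ge 0$ and $1\ge p\ge q\ge 0$, and the target inequality becomes $a\,p\,(1-q)\ge b\,q\,(1-p)$. I would finish in two elementary monotonicity steps: from $a\ge b\ge 0$ and $p\ge q\ge 0$ one obtains $ap\ge bq\ge 0$ by multiplying nonnegative inequalities; from $p\ge q$ one obtains $1-q\ge 1-p\ge 0$. Multiplying these two nonnegative inequalities termwise yields $ap(1-q)\ge bq(1-p)$, which is exactly (\ref{veValdes}).

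There is no real obstacle here. All of the substantive work has already been done in setting up the integral system (\ref{sistem_int}), solving it for $\mathbb{E}[\tau_0^C]$ and $\mathbb{E}[\tau_3^C]$ to get (\ref{analytic_expr}), and algebraically reducing the comparison to (\ref{veValdes}). Once (\ref{veValdes}) is in hand, the proposition is a direct corollary of the monotonicity of multiplication on the nonnegative reals, and no further probabilistic input is needed.
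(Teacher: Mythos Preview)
Your proposal is correct and follows the same route as the paper: both reduce the claim to inequality (\ref{veValdes}) and observe that it holds under the stated hypotheses. The paper simply asserts the proposition as a consequence of (\ref{veValdes}) without spelling out the scalar verification, whereas you supply the two-line monotonicity argument explicitly; the approaches are otherwise identical.
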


{The result of Proposition \ref{allocation_gral} is interesting because it seems contradictory. A critical moment for the system with lifetime $\tau_0^C$ is when $ C_2 $ is working due to the failure of $C_1$. If $ X_2 <Y_1 $, then system failure occurs. So it is logical to ask for $ P [X_ {2}> Y_ {1}] \ge P [X_ {1}> Y_ {2}] $ , even though the unit $ C_1 $ is principal in the system with lifetime $ \tau^C_0 $.}

\begin{example}
  The conditions in Proposition \ref{allocation_gral} hold, for instance, in the following cases:
\begin{enumerate}
  \item $E[X_1]=E[X_2]$, $X_1\le_{icv} X_2$, $Y_1=_{st} Y_2 =_{st} Y$ and $g(t)$, the density function of $Y$, is decreasing (this is true by part (b) of Lemma 7.1, p. 120 of \cite{barlow_r._e._statistical_1981}).
  \item $E[X_1]\ge E[X_2]$, $Y_i\equiv T_i$ and $F_1(T_2)\ge F_2(T_1)$. {Note that, if $X_1 \ge_{st}X_2$, then necessarily $T_1 \le T_2$.}
  \item {$Y_1 \equiv T_1$, $Y_2\equiv T_2$ and $X_1$, $X_2$ are exponentially distributed with hazard rates $\lambda_1$ and $\lambda_2$, respectively.}
  We proceed with the proof of this last case. Suppose that $X_i$, has mean $1/\lambda_i$, for $i=1,2$.
   Then, (\ref {veValdes}) is equivalent to
\begin{equation}\label{eqJoss}
   \frac{\lambda_2 e^{-\lambda_2 T_1}}{1 -  e^{-\lambda_2 T_1}} 
  \ge \frac{\lambda_1 e^{-\lambda_1 T_2}}{1 -  e^{-\lambda_1 T_2}}.
\end{equation}
If $\lambda_1 \ge \lambda_2$ and $T_1 \le T_2$
the inequality (\ref{eqJoss}) holds because the function $q(x) = xe^{-x}/(1 - e^{-x})$ is decreasing for all $x \ge 0$ and $\lambda_2 T_1 \le \lambda_1 T_2$.

{
Also, if $\lambda_1 \le \lambda_2$, the inequality (\ref{eqJoss}) holds when $\lambda_2 T_1 \le \lambda_1 T_2$ since the function $u(x) = e^{-x}/(1 - e^{-x})$ is decreasing for all $x \ge 0$. Then,  $\mathbb{E}\left[\tau_0^C\right] \ge \mathbb{E}\left[\tau_3^C\right]$ if
$\displaystyle \frac{T_1}{T_2} \le \min\left\{1,  \frac{\lambda_1}{\lambda_2} \right\}$.}
\end{enumerate}
 \end{example}

{Assume $X_i$ and $Y_i$ exponentially distributed with hazard rates $\lambda_i$ and $\mu_i$ for $i=1,2$, respectively. Solving (\ref{sistem_int}) in this case we get,
\begin{equation}\label{LaplaceTau0}
	\widehat{\Phi}_0^C = \frac{(s + \lambda_2)(s + \lambda_1 + \mu_2)(s + \lambda_2 + \mu_1) + \lambda_1(s + \lambda_2)(s + \lambda_1 + \mu_2) + \lambda_1 \mu_1 (s + \lambda_1 + \lambda_2 + \mu_2)}{(s + \lambda_1)(s + \lambda_2)(s + \lambda_1 + \mu_2)(s + \lambda_2 + \mu_1) - \lambda_1 \lambda_2 \mu_1 \mu_2}.
\end{equation}}

Note that taking $s = 0$ in (\ref{LaplaceTau0}) we have
\begin{eqnarray}
	\mathbb{E}[\tau^C_0] &=& \frac{\mu_1 }{\lambda_1 \lambda_2 + \lambda_1 \mu_1 + \lambda_2 \mu_2} + \frac{\lambda_1 \lambda_2( \lambda_1 + \lambda_2+ \mu_1 + \mu_2) + \lambda_1\mu_1(\lambda_1 + \mu_2) + \lambda_2 \mu_2 (\lambda_2 + \mu_1)}{\lambda_1 \lambda_2(\lambda_1 \lambda_2 + \lambda_1 \mu_1 + \lambda_2 \mu_2)}.\label{eq2}
\end{eqnarray}
In (\ref{eq2}) the denominator of the first summand and the second summand are symmetric as a function of $\lambda_1$, $\mu_1$ and $\lambda_2$, $\mu_2$. Thus, $\mu_1 > \mu_2$ if and only if $\mathbb{E}\left[\tau^C_0\right] > \mathbb{E}\left[\tau^C_3\right]$, {i.e. the ordering between the mean lifetimes of $\tau^C_0$ and $\tau^C_3$ does not depend on the mean lifetimes of the units, but only depends on their mean repair times.}

In order to have $\widehat{\Phi}^C_0(s) \ge \widehat{\Phi}^C_3(s)$, because of the symmetry of the denominator of (\ref{LaplaceTau0}), it is sufficient to verify that the following inequality holds
\begin{eqnarray}
   & & (s + \lambda_2)(s + \lambda_1 + \mu_2)(s + \lambda_2 + \mu_1) + \lambda_1(s + \lambda_2)(s + \lambda_1 + \mu_2) + \lambda_1 \mu_1 (s + \lambda_1 + \lambda_2 + \mu_2) \nonumber\\
   & \ge & (s + \lambda_1)(s + \lambda_2 + \mu_1)(s + \lambda_1 + \mu_2) + \lambda_2(s + \lambda_1)(s + \lambda_2 + \mu_1) + \lambda_2 \mu_2 (s + \lambda_2 + \lambda_1 + \mu_1). \label{ordenLT}
\end{eqnarray}

Let us analyze what happen when $\lambda_1 = \lambda_2$.
\begin{proposition}\label{orden_lt1}
	If $\lambda_1 = \lambda_2 = \lambda$, then $\mu_1 \ge \mu_2$ if and only if $\tau_0^C \ge_{lt} \tau_3^C$.
\end{proposition}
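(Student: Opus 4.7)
The plan is to work directly from the Laplace transform characterization of the $\leq_{lt}$ order. By definition, $\tau_0^C \geq_{lt} \tau_3^C$ is equivalent to $\widehat{\Phi}_0^C(s) \geq \widehat{\Phi}_3^C(s)$ for every $s\geq 0$. Since the two systems differ only in which unit starts in working position, $\widehat{\Phi}_3^C(s)$ is obtained from formula (\ref{LaplaceTau0}) by interchanging $(\lambda_1,\mu_1) \leftrightarrow (\lambda_2,\mu_2)$. The denominator of (\ref{LaplaceTau0}) is already symmetric under this interchange, so it is a common (positive, since $\widehat{\Phi}_0^C$ and $\widehat{\Phi}_3^C$ are both nonnegative) factor, and the inequality $\widehat{\Phi}_0^C(s) \geq \widehat{\Phi}_3^C(s)$ reduces exactly to (\ref{ordenLT}).

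Next I would substitute $\lambda_1=\lambda_2=\lambda$ into (\ref{ordenLT}). The first term on each side of (\ref{ordenLT}) becomes the same symmetric product $(s+\lambda)(s+\lambda+\mu_1)(s+\lambda+\mu_2)$ and therefore cancels. After factoring out the common $\lambda$, the inequality to verify collapses to
\begin{equation*}
(s+\lambda)(s+\lambda+\mu_2) + \mu_1(s+2\lambda+\mu_2) \;\geq\; (s+\lambda)(s+\lambda+\mu_1) + \mu_2(s+2\lambda+\mu_1).
\end{equation*}
Grouping the terms that contain $\mu_1-\mu_2$, the left side minus the right side simplifies to $(\mu_1-\mu_2)\bigl[(s+2\lambda)-(s+\lambda)\bigr]=\lambda(\mu_1-\mu_2)$, so the full numerator difference in (\ref{ordenLT}) equals $\lambda^2(\mu_1-\mu_2)$.

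Hence, for $\lambda>0$, the numerator difference is nonnegative for every $s\geq 0$ if and only if $\mu_1\geq\mu_2$, which gives the ``if'' direction. For the ``only if'' direction, the same identity shows that if $\mu_1<\mu_2$ then the numerator difference is the strictly negative constant $\lambda^2(\mu_1-\mu_2)$, so $\widehat{\Phi}_0^C(s)<\widehat{\Phi}_3^C(s)$ for every $s\geq 0$ and the ordering fails. I expect no real obstacle: the main observation is simply the cancellation of the symmetric cubic term in (\ref{ordenLT}) under $\lambda_1=\lambda_2$, after which everything is linear algebra on a one-line polynomial identity. The only mild subtlety is to notice that the denominator is the same for $\widehat{\Phi}_0^C$ and $\widehat{\Phi}_3^C$ and is strictly positive, so dividing through preserves the direction of the inequality.
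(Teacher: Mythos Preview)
Your proof is correct and follows essentially the same route as the paper: reduce to the numerator inequality (\ref{ordenLT}) by symmetry of the denominator, cancel the symmetric cubic term when $\lambda_1=\lambda_2=\lambda$, and show the remaining inequality is equivalent to $\mu_1\ge\mu_2$. Your explicit computation of the numerator difference as $\lambda^2(\mu_1-\mu_2)$ is a slightly cleaner finish than the paper's rearrangement, and your remark on the positivity of the common denominator is a welcome detail the paper leaves implicit.
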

\begin{proof}
Due to the symmetry in (\ref{ordenLT}) it is enough to check that
\[
    (s + \lambda)(s + \lambda + \mu_2) +  \mu_1 (s + 2\lambda + \mu_2)
    \ge
    (s + \lambda)(s + \lambda + \mu_1) + \mu_2 (s + 2\lambda + \mu_1).
\]
But this inequality is equivalent to
\(
    \mu_2 (s + \lambda) + \mu_1 (s + 2\lambda)
    \ge
    \mu_1 (s + \lambda) + \mu_2 (s + 2\lambda),
\)
which is satisfied if and only if $\mu_1 \ge \mu_2$.
\end{proof}

It is natural to ask what happens when $\mu_1 = \mu_2$. Surprisingly, we get the following result,
\begin{proposition}\label{orden_lt1}
	If $\mu_1 = \mu_2$ and $\lambda_1$, $\lambda_2$ are arbitrary, then $\tau_0^C =_{st} \tau_3^C$.
\end{proposition}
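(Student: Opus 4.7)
The plan is to establish the stronger statement $\widehat{\Phi}_0^C(s) = \widehat{\Phi}_3^C(s)$ for all $s\ge 0$, which by uniqueness of the Laplace transform forces $\tau_0^C$ and $\tau_3^C$ to have the same distribution (in particular $\tau_0^C =_{st} \tau_3^C$).

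First, I would obtain a closed form for $\widehat{\Phi}_3^C$ without solving (\ref{sistem_int}) anew, by invoking the relabeling symmetry of the model: swapping the identities $C_1\leftrightarrow C_2$ turns the initial condition defining $\tau_3^C$ (with parameters $(\lambda_1,\lambda_2,\mu_1,\mu_2)$) into the initial condition defining $\tau_0^C$ (with parameters $(\lambda_2,\lambda_1,\mu_2,\mu_1)$). Hence $\widehat{\Phi}_3^C$ is obtained from the right-hand side of (\ref{LaplaceTau0}) by the simultaneous swap $\lambda_1\leftrightarrow\lambda_2,\ \mu_1\leftrightarrow\mu_2$. The denominator in (\ref{LaplaceTau0}) is already invariant under this swap, so the problem reduces to showing that the two numerators coincide when $\mu_1=\mu_2=\mu$.

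Second, I would factor the numerator of (\ref{LaplaceTau0}) at $\mu_1=\mu_2=\mu$. The first two terms share the factor $(s+\lambda_2)(s+\lambda_1+\mu)$, and their remaining factors combine as $(s+\lambda_2+\mu)+\lambda_1=(s+\lambda_1+\lambda_2+\mu)$; this same factor appears explicitly in the third term $\lambda_1\mu(s+\lambda_1+\lambda_2+\mu)$. Pulling it out, the numerator becomes
\[
(s+\lambda_1+\lambda_2+\mu)\bigl[(s+\lambda_2)(s+\lambda_1+\mu)+\lambda_1\mu\bigr].
\]
Expanding the bracket yields $s^2+s(\lambda_1+\lambda_2+\mu)+\lambda_1\lambda_2+\mu(\lambda_1+\lambda_2)$, which is manifestly symmetric in $(\lambda_1,\lambda_2)$. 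Applying the same factorization to $\widehat{\Phi}_3^C$ produces an identical numerator, hence the equality.

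The computation is pure algebra and the only ``trick'' is spotting the common factor $(s+\lambda_1+\lambda_2+\mu)$; conceptually there is no real obstacle. What is notable is that (\ref{LaplaceTau0}) is \emph{not} symmetric in $(\lambda_1,\mu_1)\leftrightarrow(\lambda_2,\mu_2)$ in general, yet imposing $\mu_1=\mu_2$ alone makes the $\lambda_i$-dependence of the numerator collapse into a symmetric expression, so that the entire lifetime distribution becomes independent of which unit is placed in the principal position at time zero.
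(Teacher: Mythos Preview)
Your proof is correct and follows the same strategy as the paper: show $\widehat{\Phi}_0^C(s)=\widehat{\Phi}_3^C(s)$ by observing the denominator of (\ref{LaplaceTau0}) is symmetric and verifying the numerators agree when $\mu_1=\mu_2=\mu$. Your algebraic step is slightly cleaner---factoring out $(s+\lambda_1+\lambda_2+\mu)$ to expose the $\lambda_1\leftrightarrow\lambda_2$ symmetry directly---whereas the paper computes the difference of the two numerators and reduces it to zero; but the underlying argument is the same.
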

\begin{proof}
  Note we need to check that,
\[
    (s + \lambda_2)(s + \lambda_1 + \mu)(s + \lambda_2 + \mu) + \lambda_1(s + \lambda_2)(s + \lambda_1 + \mu) + \lambda_1 \mu (s + \lambda_1 + \lambda_2 + \mu)
\]
\[
    =
    (s + \lambda_1)(s + \lambda_2 + \mu)(s + \lambda_1 + \mu) + \lambda_2(s + \lambda_1)(s + \lambda_2 + \mu) + \lambda_2 \mu (s + \lambda_2 + \lambda_1 + \mu),
\]
or equivalently that
\[
    \lambda_2(s + \lambda_1 + \mu)(s + \lambda_2 + \mu) + \lambda_1(s + \lambda_2)(s + \lambda_1)  + \lambda_1\mu s + \lambda_1 \mu (s + \lambda_1 + \lambda_2 + \mu)
\]
\[
    =
     \lambda_1(s + \lambda_2 + \mu)(s + \lambda_1 + \mu) + \lambda_2(s + \lambda_1)(s + \lambda_2)+ \lambda_2\mu s + \lambda_2 \mu (s + \lambda_2 + \lambda_1 + \mu).
\]
This last inequality can be written as
\[
    (\lambda_2- \lambda_1)\left[ (s + \lambda_1 + \mu)(s + \lambda_2 + \mu) - (s + \lambda_1)(s + \lambda_2)  - \mu s - \mu (s + \lambda_1 + \lambda_2 + \mu)\right] = 0,
\]
which is trivially true.

\end{proof}

Consider two cold standby systems with lifetimes $\tau^C_{(1)}$ and $\tau^C_{(2)}$. Suppose that the unit which starts to work at the initial instant in {the} system with lifetime $\tau^C_{(i)}$ has lifetime $X_{i1}$ with distribution function $F_{i1}(t)$ and repair time $Y_{i1}$ with distribution function $G_{i1}(t)$, and the other unit (waiting at the initial instant) has lifetime $X_{i2}$ with distribution function $F_{i2}(t)$ and repair time $Y_{i2}$ with distribution function $G_{i2}(t)$, for $i = 1,2$. It is not difficult to check that (\ref{analytic_expr}) is increasing as a function of  $ \mathbb{E}[X_1]$, $ \mathbb{E}[X_2]$, $P[X_1 > Y_2]$ and $P[X_2 > Y_1]$. Thus,
\begin{proposition}\label{expectedValue_gral}
	If $ \mathbb{E}[X_{11}] \ge  \mathbb{E}[X_{21}]$, $ \mathbb{E}[X_{12}] \ge  \mathbb{E}[X_{22}]$, $P[X_{11} > Y_{12}] \ge P[X_{21} > Y_{22}]$ and $P[X_{12} > Y_{11}] \ge P[X_{22} > Y_{21}]$, then $\mathbb{E}\left[\tau_{(1)}^C\right] \ge \mathbb{E}\left[\tau_{(2)}^C\right]$.
\end{proposition}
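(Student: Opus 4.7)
The plan is to exploit the closed-form expression (\ref{analytic_expr}) for $\mathbb{E}[\tau_0^C]$, which represents the mean lifetime of the system as an explicit rational function of exactly four scalar parameters: $\mathbb{E}[X_1]$, $\mathbb{E}[X_2]$, $P[X_1 > Y_2]$ and $P[X_2 > Y_1]$. The remark immediately preceding the proposition already asserts that this function is componentwise nondecreasing in each of these four arguments, and once that is established the proposition follows at once from the four hypotheses.

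First I would introduce the auxiliary function
\[
\Psi(a, b, p, q) = a + \frac{b + q a}{1 - p q}
\]
on the domain $a, b \ge 0$, $p, q \in [0, 1)$, and rewrite both $\mathbb{E}[\tau^C_{(1)}]$ and $\mathbb{E}[\tau^C_{(2)}]$ in the form $\Psi(\mathbb{E}[X_{i1}], \mathbb{E}[X_{i2}], P[X_{i1} > Y_{i2}], P[X_{i2} > Y_{i1}])$ by applying (\ref{analytic_expr}) separately to each system.

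Next I would verify the announced monotonicity by computing the four partial derivatives:
\[
\partial_a \Psi = 1 + \tfrac{q}{1 - pq}, \quad
\partial_b \Psi = \tfrac{1}{1 - pq}, \quad
\partial_p \Psi = \tfrac{q(b + qa)}{(1 - pq)^2}, \quad
\partial_q \Psi = \tfrac{a + bp}{(1 - pq)^2}.
\]
Each of these is a ratio whose denominator is strictly positive and whose numerator is a sum of nonnegative terms, so all four partial derivatives are nonnegative on the relevant domain.

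Finally, since the four hypotheses of the proposition state precisely that every argument of $\Psi$ for system $(1)$ dominates the corresponding argument for system $(2)$, the inequality $\mathbb{E}[\tau_{(1)}^C] \ge \mathbb{E}[\tau_{(2)}^C]$ follows by chaining the monotonicity one coordinate at a time. The only subtlety worth flagging is that one needs $1 - pq > 0$ for the representation to make sense as a finite expectation; this holds unless the system never fails, in which case the conclusion is trivial. No substantive obstacle arises, since the heavy lifting was already done in deriving (\ref{analytic_expr}).
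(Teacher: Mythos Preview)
Your proposal is correct and follows exactly the approach indicated in the paper, which simply asserts just before the proposition that expression (\ref{analytic_expr}) is increasing in each of the four quantities $\mathbb{E}[X_1]$, $\mathbb{E}[X_2]$, $P[X_1>Y_2]$, $P[X_2>Y_1]$. You have merely made that monotonicity claim explicit by computing the four partial derivatives of $\Psi$, which is precisely the verification the paper omits.
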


As a consequence of Proposition \ref{expectedValue_gral}, when $X_{ij}$ and $Y_{ij}$ are exponentially distributed with hazard rates $\lambda_{ij}$ and $\mu_{ij}$, respectively, for $i,j \in \{1,2\}$, we have
\begin{proposition}
	If $\lambda_{11} \le  \lambda_{21}$, $\lambda_{12} \le  \lambda_{22}$, $\displaystyle \frac{\lambda_{11}}{\mu_{12}} \le \frac{\lambda_{21}}{\mu_{22}}$ y $\displaystyle \frac{\lambda_{12}}{\mu_{11}} \le \frac{\lambda_{22}}{\mu_{21}}$, then $\mathbb{E}\left[\tau_{(1)}^C\right] \ge \mathbb{E}\left[\tau_{(2)}^C\right]$.
\end{proposition}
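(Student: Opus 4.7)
The plan is to reduce this statement to Proposition \ref{expectedValue_gral} by translating each of the four hypotheses on the exponential parameters into one of the four hypotheses required there. Since $X_{ij}$ and $Y_{ij}$ are exponentially distributed with hazard rates $\lambda_{ij}$ and $\mu_{ij}$ respectively, and all these random variables are assumed independent, the comparisons of the means and of the probabilities $P[X > Y]$ admit explicit closed forms.

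First, I would dispatch the mean-lifetime conditions. Since $\mathbb{E}[X_{ij}] = 1/\lambda_{ij}$, the hypotheses $\lambda_{11} \le \lambda_{21}$ and $\lambda_{12} \le \lambda_{22}$ yield immediately $\mathbb{E}[X_{11}] \ge \mathbb{E}[X_{21}]$ and $\mathbb{E}[X_{12}] \ge \mathbb{E}[X_{22}]$, which are the first two requirements of Proposition \ref{expectedValue_gral}.

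Next, I would handle the probability comparisons. For independent exponentials $X \sim \exp(\lambda)$ and $Y \sim \exp(\mu)$ a direct computation gives
\[
P[X > Y] = \int_0^\infty e^{-\lambda y}\mu e^{-\mu y}\, dy = \frac{\mu}{\lambda + \mu} = \frac{1}{1 + \lambda/\mu},
\]
which is a decreasing function of the ratio $\lambda/\mu$. Applying this to the pair $(X_{i1}, Y_{i2})$ for $i = 1,2$, the hypothesis $\lambda_{11}/\mu_{12} \le \lambda_{21}/\mu_{22}$ translates into $P[X_{11} > Y_{12}] \ge P[X_{21} > Y_{22}]$. Analogously, applying it to the pair $(X_{i2}, Y_{i1})$, the hypothesis $\lambda_{12}/\mu_{11} \le \lambda_{22}/\mu_{21}$ translates into $P[X_{12} > Y_{11}] \ge P[X_{22} > Y_{21}]$.

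With all four hypotheses of Proposition \ref{expectedValue_gral} verified, the desired conclusion $\mathbb{E}[\tau^C_{(1)}] \ge \mathbb{E}[\tau^C_{(2)}]$ follows at once. There is no genuine obstacle here: the whole argument is a direct reparametrization, and the only nontrivial ingredient is the elementary identity $P[X > Y] = \mu/(\lambda + \mu)$ for independent exponentials, which makes the monotonicity in $\lambda/\mu$ transparent.
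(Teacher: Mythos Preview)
Your proof is correct and follows exactly the approach indicated in the paper, which presents this proposition as an immediate consequence of Proposition~\ref{expectedValue_gral} in the exponential case without spelling out the details. Your explicit computation of $P[X>Y]=\mu/(\lambda+\mu)=1/(1+\lambda/\mu)$ and the observation that this is decreasing in $\lambda/\mu$ is precisely the translation the paper leaves implicit.
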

Of course, if $\lambda_{11} \le  \lambda_{21}$, $\lambda_{12} \le  \lambda_{22}$, $\mu_{11} \ge \mu_{21}$ and $\mu_{12} \ge \mu_{22}$, then $\mathbb{E}\left[\tau_{(1)}^C\right] \ge \mathbb{E}\left[\tau_{(2)}^C\right]$.

\section*{Acknowledgment}

The authors are very grateful to the referees for their valuable comments which helped to
improve the presentation of this article.

\bibliographystyle{apalike}

\end{document}